\def\ra{{\rightarrow}}
 \title{Support convergence for  the spectrum of Wishart matrices with correlated entries }
 \author{Alice Guionnet$^\dagger$ and Kevin Richard$^\flat$} 
   \date{02/2014} 
 \newtheorem{thm}{Theorem}
\newtheorem{lemma}[thm]{Lemma}
\newtheorem{remark}[thm]{Remark}
\newtheorem{corollaire}[thm]{Corollary}
\def\Tr{{\rm Tr}}
\newtheorem{hypothesis}[thm]{Hypothesis}
\begin{document}

\maketitle

 \thanks{$\dagger$: MIT, Mathematics department, 77 Massachusetts Av, Cambridge MA 02139-4307, USA, guionnet@math.mit.edu,   and CNRS \& \'Ecole Normale Sup\'erieure
de Lyon, Unit\'e de math\'ematiques pures et appliqu\'ees, 46 all\'ee
d'Italie, 69364 Lyon Cedex 07, France.\\
Research partially supported by  Simons foundation and NSF award DMS-1307704.}

\thanks{$\flat$: ENS Cachan, Mathematics department, 61 avenue du Pr\'esident Wilson, 94230 Cachan, France, krichard@ens-cachan.fr.}

\section{Introduction}
We consider Wishart matrices given by
\begin{equation}\label{Wis}Z_{n,m}=\frac{1}{n} \sum_{1\le p\le m} X_pX_p^*\end{equation}
where $X_p, p\ge 0$ are i.i.d $n$-dimensional vectors.  These matrices were intensively studied in connection with statistics. When the entries of $X_p$ are independent, equidistributed and with finite second  moment, 
it was shown by Marchenko and Pastur \cite{MP} that the empirical measure $L_n  = n^{-1} \sum_{i=1}^{n} \delta_{\lambda_i}$ of the eigenvalues of such matrices converges almost surely as $n,m$ go to infinity so that $m/n$ goes to $c\in (0,+\infty)$. This result was extended 
by A.Pajor and L.Pastur \cite{PP}  in the case where the entries of the $X_p$'s are correlated but
have a log-concave isotropic law. Very recently, these authors together with O. Gu\'edon and A. Lytova, proved the central limit theorem for the centered linear statistics \cite{GLPP} in the setting of \cite{PP}. To that end, they additionally assume that the law of the entries are ``very good'', see \cite[Definition1.6]{GLPP}, in the sense that mixed moments of degree four satisfy asymptotic conditions and quadratic forms satisfy concentration of measure property. 

In this article we will consider also the case where 
the entries of the $X_p$'s are correlated but
have a strictly log-concave law.  We will show, under some symmetry and convergence hypotheses, that 
 the
central limit theorem for linear statistics    holds around their limit, and deduce the  convergence of the eigenvalues  to the support of the limiting measure. To prove this result we shall assume that the law of the entries is ``very good'' in the sense of \cite{GLPP}, but in fact even more that it is symmetric and with strictly log-concave law. The two later assumptions could possibly be removed.

The fluctuations of the spectral measure   around the limiting measure or around the expectation were first studied by Jonsson \cite{jonsson} then by Pastur \emph{et al.} in \cite{KKP96},   and Sinai and Soshnikov \cite{sinai} with
$p\ll N^{1/2}$ possibly  going to infinity with $N$.  Since then, a long list of further-reaching results   have been obtained:  the central limit theorem was extended to the so-called matrix models where the entries interact via a potential in  \cite{johansson},  
   the set of test functions was extended and the assumptions on the entries of the Wigner matrices weakened in \cite{BaiYaoBernoulli2005,BAI2009EJP,lytova,MShcherbina11}, 
   Chatterjee developed a general approach to these questions in \cite{chatterjee}, under the condition that the law $\mu$ can be written as a transport of the Gaussian law... Here, we will follow mainly the approach developed by Bai and Silverstein in \cite{BS} to
   study the fluctuations of linear statistics in the case where the vectors may have dependent entries. Since the fluctuations of the centered linear statistics were already studied in \cite{GLPP}, 
   we shall concentrate on the convergence of the mean of linear statistics (even though Bai and Silverstein method extends to obtain this result as well).
   We show that the mean, as the covariance (see \cite{GLPP})  will depend on several fourth joint moments of the entries of this vector. 

Convergence of the support of the eigenvalues towards the support of the limiting measure  was shown in \cite{BY} and \cite{BS0} in the case of independent entries.  We show that this convergence still
holds  in the case of dependent entries with log-concave distribution and
therefore that such a dependency can  not result in  outliers.  

\subsection{Statement of the results}
We consider a random matrix $Z_{n,m}$ given by \eqref{Wis},  with  $m$ independent copies of a $n$-dimensional vector $X$  whose entries maybe correlated. More precisely we assume that $X$ follows 
the following distribution: 

$$d\mathbb P(X) = \frac{1}{Z_n} e^{-V(X_1,\dotsc,X_n)}\prod_{i=1}^n dX_i.$$

\begin{hypothesis}\label{hypo1}
We will assume that the mean of $X$ is zero, that its covariance matrix is the identity matrix and that the four moments are homogeneous : $$\forall i \in [\![1,n]\!], \mathbb{E}[X_i^4] = \mu.$$

Besides, the following condition holds for the Hessian matrix of $V$ : \begin{equation}\label{hess}\mathrm{Hess}(V)  \ge \frac{1}{C} I_n, \: C > 0.\end{equation}
Moreover, the law of $X$ is symmetric, that is $\mbox{Law}(X_{\sigma(1)},\ldots, X_{\sigma(n)})=\mbox{Law}(X_1,\ldots, X_n)$ for all permutation $\sigma$ of $\{1,\ldots,n\}$.

\end{hypothesis}

 This implies
that  concentration inequalities hold for the vector $X$  (see the Appendix). In particular, we have $\mathrm{Var}(\sum X_i^2) = O(n)$. Thus, it is natural to assume  the  following condition : 
\begin{hypothesis}\label{hypo2} There exists $\kappa>0$ so that 
% $$\sup_{n\ge 0} n^\delta \sup_{i \neq j} \mathrm{Cov}(X_i^2, X_j^2) <\infty,$$
%$$\lim_{n \to \infty} \sup_{i \neq j \neq p \neq q} \mathbb{E}[X_i X_j X_p X_q] = 0,$$
$$ \lim_{n \to \infty} n^{-1} \mathrm{Var}(\sum X_i^2) = \kappa.$$ 
\end{hypothesis}
An example of potential $V$ fulfilling both hypotheses \ref{hypo1} and \ref{hypo2} is given in section \ref{subexample}.

Let us consider the matrix $Y_{n,m}$ whose columns are $m$ independent copies of the vector $\frac{1}{\sqrt{n}}X$. We denote by $W_{n,m}$ the symmetric block matrix of size $n+m$ : $$W_{n,m} \: = \: \begin{pmatrix} 0 & {Y_{n,m}}^* \\ Y_{n,m} & 0 \end{pmatrix}\,.$$

Let $F_{n,m}$ be the empirical spectral distribution of $W_{n,m}$ defined by : $$F_{n,m}(dx) \: = \: \frac{1}{n+m} \sum_{i=1}^{n+m} \delta_{\lambda_i}.$$
Note that
$$\frac{1}{n}{\rm Tr}( f(Z_{n,m}))=\frac{n+m}{2n} \int f(x^2) F_{n,m}(dx)+\frac{m-n}{2n} f(0)$$
so that the study of the eigenvalues of $W_{n,m}$ or $Z_{n,m}$ are equivalent.
We will assume that 
\begin{hypothesis}\label{hypo3}
$(m/n) \to c \in [1, \infty)$.  Moreover,  $cn-m $ converges towards $ \sigma$ as $n,m$ go to infinity.\end{hypothesis}

It has been proved by A. Pajor and L.Pastur that, when $n \to \infty, m \to \infty$ and $(m/n) \to c \in [1, \infty)$,  the spectral measure $L_n$ of $Z_{n,m}$ converges almost surely to the Marchenko-Pastur law. It is then not difficult
to prove that
 $F_{n,m}$ converges almost surely  to a probability measure $F$. In this paper, we will study the weak convergence of the centered  measure $M_{n,m}(dx) \: = (n+m)(\mathbb E[F_{n+m}](dx)-F(dx))$.

Let  $A_K$ be the set of the set of $C^K$   functions $f$.  We will focus on the empirical process $M_{n,m} := {M_{n,m}(f)}$ indexed by $A_K$ : $$ M_{n,m}(f) \: := \: (n+m) \int_{\mathbb{R}} f(x)(\mathbb E[F_{n,m}] - F)(dx),  \: \: \qquad f \in A_K.$$

\begin{thm}\label{THMclt}
Under hypotheses  \ref{hypo1}, \ref{hypo2},  and \ref{hypo3}, there exists $K<\infty$
so that  the process $M_{n,m} := (M_{n,m}(f))$ indexed by the set $A_K$  converges to a process $M:= \{ M(f), f \in A_K \}$. Moreover,  this process depends on both $\kappa$ and $\mu$.
\end{thm}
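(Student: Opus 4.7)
The theorem asserts joint convergence in distribution of the process $M_{n,m}$ on the class $A_K$. My strategy is to split it into a centered random fluctuation and a deterministic bias,
\[
M_{n,m}(f) \;=\; N_{n,m}(f) + b_{n,m}(f),
\]
where $N_{n,m}(f) := (n+m)\int f\,d(F_{n,m}-\mathbb{E}[F_{n,m}])$ and $b_{n,m}(f) := (n+m)\int f\,d(\mathbb{E}[F_{n,m}]-F)$, and to handle each part separately. For the random piece $N_{n,m}$, Hypothesis~\ref{hypo1} (strictly log-concave, permutation-symmetric, homogeneous fourth moment) implies that the law of $X$ is ``very good'' in the sense of Definition~1.6 of \cite{GLPP}: strict log-concavity delivers the concentration of quadratic forms via a Brascamp--Lieb / Helffer--Sj\"ostrand inequality, and the permutation symmetry together with the homogeneous fourth moment provides the required mixed-moment asymptotics. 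Invoking the CLT of \cite{GLPP} then yields finite-dimensional convergence of $N_{n,m}$ to a centered Gaussian field $N$ on $A_K$ whose covariance is an explicit functional of $c$, $\mu$, and $\kappa$.

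\textbf{Step 1 (Deterministic bias via Stieltjes transforms).} Set $g_{n,m}(z)=\tfrac{1}{n+m}\Tr(W_{n,m}-z)^{-1}$ and let $\tilde g$ be the Stieltjes transform of $F$. Apply the Schur complement identity to $(W_{n,m}-z)^{-1}$ using the block structure of $W_{n,m}$, and combine it with the integration-by-parts formula associated with $e^{-V}$ to compute $\mathbb{E}[X^*AX]$, $\mathrm{Var}(X^*AX)$, and $\mathbb{E}[(X^*AX)(X^*BX)]$ uniformly in deterministic matrices $A,B$. This gives a self-consistent equation for $\mathbb{E}[g_{n,m}(z)]$ that differs from the Marchenko--Pastur fixed-point equation by an $O(1/(n+m))$ correction, from which
\[
(n+m)\bigl(\mathbb{E}[g_{n,m}(z)]-\tilde g(z)\bigr)\;\longrightarrow\; h(z)
\]
pointwise on $\mathbb{C}\setminus\mathbb{R}$, with $h$ an explicit function of $z,c,\mu,\kappa$.

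\textbf{Step 2 (Lift to $A_K$ and tightness).} For $f \in A_K$ use the Helffer--Sj\"ostrand representation
\[
\Tr f(W_{n,m}) \;=\; \frac{1}{\pi}\int_{\mathbb{C}} \partial_{\bar z}\tilde f(z)\,\Tr(W_{n,m}-z)^{-1}\,dx\,dy,
\]
with $\tilde f$ a smooth quasianalytic extension of $f$ whose $\partial_{\bar z}\tilde f$ vanishes to order $K-1$ on $\mathbb{R}$. Inserting into $M_{n,m}(f)$ rewrites the whole process as an integral of $(n+m)(g_{n,m}(z)-\tilde g(z))$ against $\partial_{\bar z}\tilde f$. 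Log-concave concentration furnishes $\mathrm{Var}(g_{n,m}(z))\le C/((n+m)^2|\mathrm{Im}\,z|^4)$, so for $K$ large enough (a small integer suffices) the integral converges absolutely down to the real axis. Combined with Step~1 and the Stieltjes-level CLT of \cite{GLPP} for $N_{n,m}$, this yields tightness of $M_{n,m}$ on $A_K$ and identifies the limit as
\[
M(f) \;=\; N(f) \;+\; \tfrac{1}{\pi}\!\int_{\mathbb{C}} \partial_{\bar z}\tilde f(z)\,h(z)\,dx\,dy,
\]
a Gaussian process plus a deterministic shift, depending jointly on $\kappa$ and $\mu$.

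\textbf{Main obstacle.} The hard step is Step~1: disentangling the dependence of $h(z)$ on $\mu$ and on $\kappa$. This requires a cumulant expansion of $\mathbb{E}[(X^*AX)(X^*BX)]$ in which the fourth moment $\mu$ enters through the diagonal contribution $\sum_i A_{ii}B_{ii}$, while $\kappa = \lim n^{-1}\mathrm{Var}(\sum_i X_i^2)$ enters through a trace-trace term that the permutation symmetry of the law collapses to a single effective constant. Propagating both parameters through the iterated resolvent expansion to $o(1)$ accuracy, with remainders controlled uniformly by log-concave concentration inequalities, is the core technical computation.
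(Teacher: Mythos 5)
Your overall strategy for the part of the statement that actually needs proving is the paper's strategy: establish a self-consistent equation for $\mathbb{E}[s_{n,m}(z)]$ whose $O(1/n)$ correction identifies the limit $h(z)$ (the paper's $M(z)$ in Theorem \ref{THMformulaclt}), then transfer from resolvents to $C^K$ test functions via a Helffer--Sj\"ostrand / quasianalytic-extension formula with $\bar\partial$-derivative vanishing to order $K$ on the real axis. However, two points need to be flagged.

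First, you have misread the object. In the paper $M_{n,m}(f) = (n+m)\int f\,d(\mathbb{E}[F_{n,m}]-F)$ is purely deterministic: it is exactly your $b_{n,m}(f)$, not $N_{n,m}(f)+b_{n,m}(f)$. The random fluctuation $N_{n,m}$ and the appeal to the CLT of \cite{GLPP} are not part of Theorem \ref{THMclt}; the paper explicitly defers the random process $G_{n,m}=N_{n,m}+M_{n,m}$ to a remark after Theorem \ref{THMeig}. This does not make your argument wrong, but it imports hypotheses (``very good'' in the sense of \cite{GLPP}) and machinery that the theorem does not require, and it obscures that the entire content here is the asymptotics of the mean.

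Second, and more substantively, what you label the ``main obstacle'' is the proof. The identification of $h(z)$ --- in the paper, the computation of $n\mathbb{E}[\epsilon_k]$ via $\mathrm{Tr}(S_1-(S_k)_1) = ((W-z)^{-1}_{kk})'/(W-z)^{-1}_{kk}$, and above all the six-term decomposition of $\mathrm{Var}(\alpha_k^*S_k\alpha_k)$ into $T_1^n,\dots,T_6^n$, showing that $T_1^n, T_5^n$ produce $(\mu-2)(s^1)^2+\partial_z s^1$, that $T_2^n$ produces $\kappa(s^1)^2$ via Hypothesis \ref{hypo2}, and that $T_3^n,T_4^n,T_6^n$ are negligible using the permutation symmetry and the bounds $\mathbb{E}[s_{pq}]=O(n^{-3/2})$, $\mathbb{E}[(s_{ii}-s^1)s_{pq}]=O(n^{-3/2})$ --- is where $\mu$ and $\kappa$ actually enter and occupies most of the paper. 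Your sketch correctly predicts where each parameter comes from, but a proof must carry this out. Relatedly, in Step 2 you need the convergence of $(n+m)(\mathbb{E}[g_{n,m}(z)]-\tilde g(z))$ to be \emph{uniform} on $\{\Im z\ge n^{-c}\}$ for some explicit $c>0$ (the paper tracks powers of $v_0^{-1}$ for exactly this reason), combined with the trivial bound $|M_{n,m}(x+iy)|\le 2(n+m)/y$ and $|\bar\partial\Psi_f|\le Cy^K$ to kill the strip $0\le y\le n^{-c}$ when $Kc>1$; pointwise convergence on $\mathbb{C}\setminus\mathbb{R}$ is not enough, and the variance bound you invoke there controls the wrong (random) term. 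Finally, the extension to non-compactly supported $f\in A_K$ requires the exponential tail bound on $\lambda_{\max}(W_{n,m})$ (Lemma \ref{borne}), which your write-up omits.
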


As a non trivial corollary, we show that the support of the matrix $W_{n,m}$ converges towards the support of  the law $F$, namely our main theorem:

\begin{thm}\label{THMeig}
Under hypotheses \ref{hypo1}, \ref{hypo2}, and  \ref{hypo3}, the support of the eigenvalues 
of $ W_{n,m}$ converges almost surely towards $[-1-\sqrt{c}, 1-\sqrt{c}]\cup [\sqrt{c}-1,\sqrt{c}+1]$.
\end{thm}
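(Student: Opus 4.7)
\textbf{Proof plan for Theorem~\ref{THMeig}.} The approach follows the Bai--Silverstein philosophy. Since the weak convergence $F_{n,m}\to F$ is already established in the paper, each point of $S:=[-1-\sqrt c,1-\sqrt c]\cup[\sqrt c-1,\sqrt c+1]=\mathrm{supp}(F)$ is automatically approached by eigenvalues almost surely. The entire content of the theorem is therefore to exclude eigenvalues lying outside any fixed neighborhood of $S$.

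Fix $\varepsilon>0$ and choose a non-negative $f\in A_K$ equal to $1$ on $K:=[-R,R]\setminus S_\varepsilon$ (for some large $R$) and vanishing on the $(\varepsilon/2)$-neighborhood $S_{\varepsilon/2}$ of $S$. Writing $N_K$ for the number of eigenvalues of $W_{n,m}$ lying in $K$, one has $N_K\le \mathrm{Tr}\,f(W_{n,m})=(n+m)F_{n,m}(f)$. Since $F(f)=0$, Theorem~\ref{THMclt} reduces to
\[ (n+m)\,\mathbb E[F_{n,m}(f)]\;=\;M_{n,m}(f)\;\longrightarrow\;M(f). \]
A key additional input, to be read off the explicit formula for $M$ produced in the proof of Theorem~\ref{THMclt}, is that $M(f)=0$ whenever $\mathrm{supp}(f)\cap S=\emptyset$: the mean correction is expressed through contour integrals of the limiting Stieltjes transform of $F$, and is therefore a signed measure carried by $S$. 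Granted this, $(n+m)\mathbb E[F_{n,m}(f)]\to 0$, so by Markov's inequality $\mathbb P(N_K\ge 1)\to 0$.

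To upgrade to almost-sure convergence, I would invoke the Gaussian concentration available under~\eqref{hess}. A direct Hoffman--Wielandt computation, using the rank-one structure of the columns of $Y_{n,m}$, shows that the map $X\mapsto \mathrm{Tr}\,f(W_{n,m}(X))$ is Lipschitz on $\mathbb R^{nm}$ with constant at most $\sqrt{2/n}\,\|f'\|_\infty$; indeed $\|W(X)-W(X')\|_{HS}^2=(2/n)\sum_p\|X_p-X'_p\|^2$. Combined with the Brascamp--Lieb inequality for strictly log-concave product measures, this yields
\[ \mathbb P\!\left(\bigl|(n+m)F_{n,m}(f)-M_{n,m}(f)\bigr|\ge s\right)\;\le\;2\exp\!\bigl(-c\,n\,s^{2}/\|f'\|_\infty^{2}\bigr), \]
with $c>0$ depending only on the constant $C$ of~\eqref{hess}. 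Taking $s=1/4$ produces a tail summable in $n$, so combined with $M_{n,m}(f)\to 0$ the Borel--Cantelli lemma forces $(n+m)F_{n,m}(f)<1$ eventually almost surely, i.e.\ $N_K=0$. Letting $\varepsilon=1/k$ range over a countable sequence concludes the proof.

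The main obstacle is the identification $M(f)=0$ for $f$ supported off $S$: Theorem~\ref{THMclt} is stated merely as convergence of the process $M_{n,m}$, whereas the support argument needs the stronger information that the limit is carried by $S$, which must be extracted from the concrete formula behind the theorem. A secondary but routine point is that the Lipschitz bound for $\mathrm{Tr}\,f(W_{n,m})$ and the use of Brascamp--Lieb should be argued cleanly on the full space, not conditionally, which is why the homogeneous constant $\sqrt{2/n}\,\|f'\|_\infty$ above — independent of $\|X\|$ thanks to the operator-Lipschitz bound on smooth $f$ — is crucial.
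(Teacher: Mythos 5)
Your overall strategy is the same as the paper's: test against smooth non-negative $f$ vanishing on a neighborhood of $S$, show $(n+m)\mathbb E[F_{n,m}(f)]=M_{n,m}(f)\to 0$, and upgrade to almost-sure absence of eigenvalues by concentration. But the step you yourself single out as ``the main obstacle'' --- that $M(f)=0$ whenever $\mathrm{supp}(f)\cap S=\emptyset$ --- is exactly where your argument has a genuine gap, and the heuristic you offer (``the mean correction is \dots a signed measure carried by $S$'') is not a proof: the limit $M$ of Theorem~\ref{THMformulaclt} is an explicit algebraic expression in $s^1$, $s^2$ and their derivatives, not a priori the Stieltjes transform of any measure. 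The paper closes this gap in three moves. First, Theorem~\ref{THMclt} is obtained through the representation $M_{n,m}(f)=\Re\int_0^\infty\!\!\int\bar\partial\Psi_f(x,y)\,M_{n,m}(x+iy)\,dx\,dy$ (the region $y\le n^{-c}$ being negligible because $|\bar\partial\Psi_f|\lesssim y^{K}$ while $|M_{n,m}(x+iy)|\lesssim n/y$), so the limit is $\Re\int\!\!\int\bar\partial\Psi_f\,M$. Second, $s^1$ and $s^2$, hence $M$, are analytic off $S$, and $\Psi_f$ is supported where $x\notin S$, so $(\bar\partial\Psi_f)M=\bar\partial(\Psi_f M)$ on the domain of integration; integrating by parts (the $\partial_x$ term drops by compact support in $x$) reduces the integral to a boundary term proportional to $i\int f(x)M(x)\,dx$. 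Third, for real $x$ outside $S$ the square roots in $s^1(x)$ and $s^2(x)$ are real, so $M(x)$ is real, the boundary term is purely imaginary, and its real part vanishes. Without this analyticity-plus-reality argument your central claim is unsupported, and the theorem does not follow.

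Two smaller points. For the almost-sure upgrade you invoke Brascamp--Lieb, but that inequality gives variance and comparison bounds, not Gaussian tails; the exponential concentration you write down should instead be derived from the logarithmic Sobolev inequality implied by \eqref{hess} (this is how the paper obtains Corollary~\ref{concspectral} and Theorem~\ref{conclargest}, either of which accomplishes the upgrade --- the paper uses concentration of the individual eigenvalues rather than of $\mathrm{Tr}f(W_{n,m})$, but your route works equally well once the right functional inequality is cited). Also, since your $f$ is compactly supported, eigenvalues with $|\lambda|>R$ escape your counting function $N_K\le\mathrm{Tr}f(W_{n,m})$; you need Lemma~\ref{borne} (the exponential bound on $\lambda_{\max}$) to rule these out, as the paper does.
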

Theorem \ref{THMclt} can be coupled with the central limit theorem for the centered statistics derived in \cite{GLPP} to derive the convergence of the 
random process $$ G_{n,m}(f) \: := \: (n+m) \int_{\mathbb{R}} f(x)(F_{n,m} - F)(dx),  \: \: \qquad f \in A_K.$$
The covariance of the limiting Gaussian process also depends on $\kappa$ and $\mu$, see \cite{GLPP}.

\subsection{Strategy of the proof}\label{sec12}
 To prove  Theorems \ref{THMeig}  and  \ref{THMclt}, we shall prove that Theorem \ref{THMclt} holds when $f$ is taken in
 the set $(z-x)^{-1}, z\in \mathbb C$. We then use a now standard strategy to generalize it to smooth enough functions that we describe below.
  More precisely,  we denote by $s_H$ the Stieltjes transform for the measure $H$ defined by : $$s_H(z) \: = \: \int_{\mathbb{R}} \frac{dH(x)}{x-z},  \: \: \qquad z \notin supp(H), $$
we let $s_{n,m}(z)$ and $s(z)$ be the Stieltjes transform of $F_{n,m}$ and $F$ respectively. We set $M_{n,m}$ to be the process
   $$M_{n,m}(z) \: = \: (n+m)(\mathbb E[ s_{n,m}(z)] - s)(z)$$
   indexed by $z$, $Im z \neq 0$.
We denote by $\mathbb{C}_{v_0}$ the set $\mathbb{C}_{v_0} \: = \: \{z = u + iv, |v| \ge v_0\,, |z| < R\}$, where R is an arbitrary  big enough constant. Then we shall prove: 

\begin{thm}\label{THMformulaclt} Assume Hypotheses \ref{hypo1}, \ref{hypo2}  and \ref{hypo3} hold. Then, there exists a constant $c>0$ such that 
for $v_0\ge n^{-c}$  the process $\{ M_{n,m}(z) ;  z\in \mathbb{C}_{v_0} \}$ converges uniformly   to the  process $\{ M(z) ; z\in \mathbb{C}_{v_0} \}$ 
 \begin{align*} M(z) := & \sigma s^2(z) + (1 + c \partial_z s^2(z)) (s^1(z))^3 \left[  - \frac{\partial_z s^1(z)}{ (s^1(z))^2} + c \left[ (4\mu -5)(s^2(z))^2 + 2\partial_z s^2(z)\right] \right] \\ & +  (1 + \partial_z s^1(z)) c (s^2(z))^3 \left[ - \frac{\partial_z s^2(z)}{(s^2(z))^2} +  \left[ 2(\mu - 2 + \frac{\kappa}{2})(s^1(z))^2 + 2 \partial_z s^1(z)\right] \right] \end{align*} where \begin{align*} & s^1(z) = - \frac{1}{2 z} \left(z^2-c+1 - \sqrt{(z^2-c+1)^2-4z^2} \right)\\ & s^2(z) = - \frac{1}{2 c z} \left(z^2+c-1 - \sqrt{(z^2-1+c)^2-4cz^2} \right)\,. \end{align*}
\end{thm}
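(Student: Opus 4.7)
My plan is to follow the Bai--Silverstein resolvent expansion adapted to the block matrix $W_{n,m}$, using the strict log-concavity of $V$ and the exchangeability of the entries of $X$ as replacements for the Gaussian integration by parts used in the classical case. The first step is a block reduction: writing the resolvent of $W_{n,m}$ in $2\times 2$ form gives $(W_{n,m}-z)^{-1}$ block-diagonal with diagonal blocks $G_1(z) = (YY^*/z - z I_n)^{-1}$ and $G_2(z) = (Y^*Y/z - z I_m)^{-1}$ together with off-diagonal pieces. Set $s^1_{n,m}(z) := \frac{1}{n}\mathbb{E}[\mathrm{Tr}\,G_1(z)]$ and $s^2_{n,m}(z) := \frac{1}{m}\mathbb{E}[\mathrm{Tr}\,G_2(z)]$, so that $s_{n,m} = \frac{n}{n+m} s^1_{n,m} + \frac{m}{n+m} s^2_{n,m}$, and it is enough to expand each $\mathbb{E}[s^i_{n,m}]$ to order $1/n$. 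The leading-order limits $s^1,s^2$ in the statement are already known from Pajor--Pastur \cite{PP} as the unique analytic solutions on $\mathbb{C}_{v_0}$ of a coupled Marchenko--Pastur system.

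The second step, which is the technical heart, is the $1/n$ expansion. Apply the Schur complement formula to isolate a single column $X_p$ of $Y$: a diagonal entry of $G_2$ becomes $(G_2)_{pp}(z) = (-z - \tfrac{1}{nz}\, X_p^* \tilde G_1(z) X_p)^{-1}$, where $\tilde G_1(z)$ denotes the $n\times n$ resolvent of the matrix obtained from $Y$ by removing column $p$ and is therefore independent of $X_p$. By Brascamp--Lieb concentration for strictly log-concave laws (using \eqref{hess}) one has $\tfrac{1}{n} X_p^*\tilde G_1 X_p = \tfrac{1}{n}\mathrm{Tr}\,\tilde G_1 + O(n^{-1/2})$ with sub-Gaussian tails, so the denominator can be expanded to second order. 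The variance of this quadratic form, after taking the expectation, depends only on $\mu = \mathbb{E}[X_i^4]$ and on $\kappa = \lim n^{-1}\mathrm{Var}(\sum X_i^2)$, because the symmetry hypothesis collapses every joint fourth moment to these two scalars. This produces the forcing terms $c[(4\mu-5)(s^2)^2 + 2\partial_z s^2]$ and $[2(\mu-2+\kappa/2)(s^1)^2 + 2\partial_z s^1]$ that appear in the statement.

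Third, plugging the $1/n$ expansion back into the finite-$n$ self-consistent equations and subtracting the limit equations yields a $2\times 2$ linear system for the pair $(M^1,M^2) := ((n+m)(s^1_{n,m}-s^1), (n+m)(s^2_{n,m}-s^2))$ whose cofactors are exactly $(1+c\partial_z s^2)$ and $(1+\partial_z s^1)$, reproducing the structure of the displayed formula; the additive $\sigma\, s^2(z)$ is the contribution of the shift $cn - m \to \sigma$ from Hypothesis~\ref{hypo3}. For uniformity over $z\in\mathbb{C}_{v_0}$ with $v_0\geq n^{-c}$, the deterministic resolvent bounds and the Brascamp--Lieb concentration depend only polynomially on $v_0^{-1}$, so choosing the exponent $c$ small enough leaves a power of $n$ to spare; a Vitali/Montel argument then upgrades pointwise convergence to uniform convergence thanks to the analyticity and uniform boundedness of $M_{n,m}$ on $\mathbb{C}_{v_0}$.

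The main obstacle is the second step. In the Gaussian case Stein's lemma makes the $1/n$ expansion essentially algebraic, but here one must control the quadratic-form fluctuation via concentration rather than an exact identity, while still identifying the coefficients of $\mu$ and $\kappa$ exactly. Obtaining precisely $(4\mu-5)$ and $(2\mu-4+\kappa)$ (rather than slightly different combinations) requires splitting the double sum defining $\mathrm{Var}(X_p^* \tilde G_1 X_p)$ into diagonal, paired-index, and four-distinct-index contributions, and using the symmetry hypothesis on each to reduce every joint fourth moment to $\mu$ or $\kappa$, then bounding the remaining cubic and higher order terms uniformly in $z\in\mathbb{C}_{v_0}$.
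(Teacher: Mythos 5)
Your plan matches the paper's proof in all essentials: the same block/Schur-complement reduction to the coupled self-consistent equations for $s^1_{n,m},s^2_{n,m}$, the same use of log-concavity-based concentration to expand the quadratic forms to second order, the same decomposition of $\mathrm{Var}(X_p^*\tilde G_1 X_p)$ by index patterns with the symmetry hypothesis collapsing fourth moments to $\mu$ and $\kappa$, and the same linearization producing the cofactors $(1+c\partial_z s^2)$, $(1+\partial_z s^1)$ and the $\sigma s^2$ term. The only cosmetic difference is that the paper obtains uniformity by tracking explicit powers of $v_0^{-1}$ rather than a Vitali/Montel argument.
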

To deduce  Theorems \ref{THMeig}  and  \ref{THMclt} from the above result, we rely on the 
  following  expression of  \cite[ formula (5.5.11)]{AGZ}, which  allows to reconstruct 
integrals with respect to a measure from its Stieltjes transform. Namely, if  $f$ is a $C^K$ compactly supported  function,  and if we set 
 $$\Psi_f(x,y) = \sum_{l=0}^K \frac{i^l}{l!} f^{(l)}(x) g(y)y^l  $$
 with $g : \mathbb{R} \mapsto [0,1]$, $g$ a smooth function with compact support, $g = 1$ near the origin and $0$ outside $[-c_0,c_0]$, with $c_0$ an arbitrary constant, then for any probability measure $\mu$ on the real line
 $$\int f (t)d\mu(t)=\Re \int_0^\infty dy\int_{-\infty}^\infty dx \left(\int\frac{\bar\partial \Psi(x,y)}{t-x-iy} \mu(dt)\right)\,.$$
 Here we have denoted $\bar\partial=\pi^{-1}(\partial_x+i\partial_y)$.
 Hence, 
 we have 
 \begin{eqnarray*}
 M_{n,m}(f) &=&\mathbb E[ \Re \int_0^{+ \infty} dy \int_{- \infty}^{+ \infty} dx \int \frac{\bar{\partial} \Psi_f(x,y)}{t-x-iy} (n+m) [F_{n,m} - F](t)dt]\\
 &=&  \Re \int_0^{+ \infty} dy \int_{- \infty}^{+\infty} dx {\bar{\partial} \Psi_f (x,y)}M_{n,m}(x+iy) \end{eqnarray*}
Theorem \ref{THMformulaclt} implies the convergence of the above integral for $y\ge n^{-c}$, with $M_{n,m}$ replaced by its limit $M$ [note here that $\Psi_f$ is compactly supported]. On the other hand, 
 $ |{\bar{\partial} \Psi_f (x,y)}|/y^K$ and $n^{-1}y M_{n,m} (x+iy)$ are uniformly bounded, so that if $Kc>1$, the integral over $[0,n^{-c}]$ 
 is neglectable. Hence, we conclude that for such functions $f$, we have 
 \begin{equation}\label{tuy}\lim_{n\ra\infty}  M_{n,m}(f)= \Re \int_0^{+ \infty} dy \int_{- \infty}^{+\infty} dx {\bar{\partial} \Psi_f (x,y)}M(x+iy) \,.\end{equation}
 This convergence extends to non-compactly supported $C^K$
functions by the rough estimates on the eigenvalues derived in Lemma \ref{borne}. This completes the proof of Theorem \ref{THMclt}.

To deduce from \eqref{tuy}   the convergence of the support of the empirical measure, that is Theorem \ref{THMeig}, we finally take $f$ $C^K$ and 
vanishing  on the support of the Pastur-Marchenko distribution. But then $\Psi_f$ also vanishes when $x$ belongs to the support
of the Pastur-Marchenko distribution. Since  $M(x+iy)$ is analytic away from this support (as it is a smooth function of $s^1$ and $s^2$ 
which are analytic there), ${\bar{\partial}M(x+iy) }$ vanishes 
on the support of integration.  This shows after an integration by parts that 
\begin{eqnarray*}
\lim_{n\ra\infty}  M_{n,m}(f)&=& \Re \int_0^{+ \infty} dy \int_{- \infty}^{+\infty} dx {\bar{\partial} [\Psi_f (x,y)}M(x+iy)]\\
&=&\Re[i\pi^{-1}  \int_0^{+ \infty} dy \int_{- \infty}^{+\infty} dx {\partial_y [\Psi_f (x,y)}M(x+iy)]\end{eqnarray*}
where we noticd that the integral of $\partial_x [\Psi_f(x,y) M(x+iy)]$ vanishes as $\Psi_f(x,y)$ vanishes when $x$ is outside a compact. On the other hand
$$ \int_0^{+ \infty} dy \int_{- \infty}^{+\infty} dx {\partial_y [\Psi_f (x,y)}M(x+iy)]=i\pi^{-1}\int f(x) M(x) dx$$
is purely imaginary. Hence, we conclude that  for any $f$ compactly supported, $C^K$ and 
vanishing  on the support of the Pastur-Marchenko distribution, we have 
$$\lim_{n\ra\infty}\mathbb E[\sum f(\lambda_i)]=\lim_{n\ra\infty}  M_{n,m}(f)=0\,.$$
Taking $f$ non-negative and greater than one on some compact which does not intersect the support of the Pastur-Marchenko law shows 
that  the probability that 
there are  eigenvalues  in this compact as $n$ goes to infinity  
vanishes. By Lemma \ref{borne}, we conclude that the probability that there is an eigenvalue at distance $\epsilon$ of the support of the Pastur-Marchenko law goes to zero
as $n$ goes to infinity. 
But we also have concentration of the extreme eigenvalues Theorem \ref{conclargest} and therefore the convergence holds almost surely. 
\section{A few useful results}

In this section, we review a few classical results about the convergence of $s_{n,m}$ and provide some proofs on which we shall elaborate to derive Theorem \ref{THMformulaclt}. In particular, we emphasis on which domain the convergence holds, in preparation to the proof of Theorem
\ref{THMeig}.

To simplify the computations, we are going to introduce a few notations. Let $$S = (W_{n,m} - z)^{-1}\,.$$ We write  $\alpha_k$ the vector obtained from the $k$-th column $W_{n,m}$ by deleting the $k$-th entry and $W_{n,m}(k)$ the matrix resulting from deleting the $k$-th row and column from $W_{n,m}$. Let  $S_k$ be $S_k = (W_{n,m}(k) - z)^{-1}$, we write $S_1$ (respectively $S_2$) the submatrix of order $n$ formed by the last $n$ row and columns (respectively  the submatrix of order $m$ formed by the first $m$ row and columns). Here are some useful notations : 

$$s_{n,m}^1(z) = \frac{1}{n} \mathrm{Tr}(S_1) \qquad s_{n,m}^2(z) = \frac{1}{m} \mathrm{Tr}(S_2)\qquad $$
$$s_{n,m}(z) = \frac{1}{n+m}\mathrm{Tr}(S) = \frac{n s_{n,m}^1(z) + m s_{n,m}^2(z)}{n+m} .$$
$$\beta_k = z + \frac{1}{n} {\alpha_k}^* S_k \alpha_k,$$
$$\epsilon_k = -\frac{1}{n} {\alpha_k}^* S_k \alpha_k + \mathbb{E}[s_{n,m}^1(z)], \: \: \qquad k \le m$$
\begin{equation}\label{defeps}\epsilon_k = -\frac{1}{n} {\alpha_k}^* S_k \alpha_k + c_{n,m} \mathbb{E}[s_{n,m}^2(z)], \: \: \qquad k > m, \: \: c_{n,m} = \frac{m}{n}\end{equation}
$$\delta_1(z) =  -\frac{1}{n} \sum_{k=m+1}^{n+m} \frac{\epsilon_k}{\beta_k(z+ c_{n,m} \mathbb{E}[s_{n,m}^2(z)])},$$
$$\delta_2(z) = - \frac{1}{m} \sum_{k=1}^{m} \frac{\epsilon_k}{\beta_k(z+ \mathbb{E}[s_{n,m}^1(z)])},$$

%Let $X$ be a normalized column of $W_{n,m}$. Section \ref{concentrationsec} of the appendix states that, for all symmetric matrix $A$, for all integer $p$, there exists $C_p < \infty$ such that $$\mathbb{E}[{|X^* A X - \mathrm{Tr}(A)|}^p] \le C_p 	{\| A \|}^p {\sqrt{n}}^p  \: \: \qquad \| A \| = \sup_{\| y \| = 1} \| Ay \|, \: \: \: \|.\| \: \text{ Euclidean norm}.$$

\begin{remark}Since $W_{n,m}$ is a real symmetric random matrix, the eigenvalues of the matrix $S$ can be written as $\frac{1}{\lambda_j - z}$, with $\lambda_j$ the real eigenvalues of the matrix $W_{n,m}$. Thus, all the eigenvalues are bounded by $\frac{1}{| \Im z |}$. We also have $|\mathrm{Tr}(S)| \le \frac{(n+m)}{| \Im z |}$ and $|\mathrm{Tr}(S^2)| \le \frac{(n+m)}{| \Im z |^2}$. 
Moreover, we have $\frac{dS(z)}{dz} \: = \: S^2(z)$.

\end{remark}

\begin{thm}\label{THMMPlaw}
For $v_0\ge n^{-\frac{1}{13}}$, uniformly on  $z \in \mathbb{C}_{v_0}$, $s_{n,m}(z)$ goes to $s(z)$ in $L_2$, where $$s(z) = -\frac{1}{1+c} \left( z - \frac{\sqrt{(z^2-c+1)^2-4z^2}}{z} \right) = \frac{s^1(z) + c s^2(z)}{1 + c}$$ 
with for $i=1,2$
$$s^i(z)= - \frac{1}{2 c^{i-1}z} \left(z^2+(-1)^i(c-1 ) - \sqrt{(z^2++(-1)^i(c-1 ))^2-4c^{i-1}z^2} \right) \,.$$
\end{thm}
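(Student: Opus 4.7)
My approach is the standard Schur-complement / self-consistent-equation strategy, adapted to the block structure of $W_{n,m}$ and to the correlated entries of $X$, with the log-concavity hypothesis supplying the concentration estimates.

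\emph{Step 1: self-consistent equations via the Schur complement.} I would first use the Schur complement formula to write $S_{kk}=-1/\beta_k$ for every diagonal entry and then average separately over the two blocks:
\begin{equation*}
s_{n,m}^1(z)=-\frac{1}{n}\sum_{k=m+1}^{n+m}\frac{1}{\beta_k},\qquad s_{n,m}^2(z)=-\frac{1}{m}\sum_{k=1}^{m}\frac{1}{\beta_k}.
\end{equation*}
Using the isotropy part of Hypothesis~\ref{hypo1}, I would compute the expectation of $\tfrac{1}{n}\alpha_k^*S_k\alpha_k$: for $k\le m$ the column $\alpha_k$ is (up to a shift) a copy of $X/\sqrt n$ whose non-zero entries live in the second block, so $\mathbb{E}[X_iX_j]=\delta_{ij}$ makes this expectation equal $\mathbb{E}[\mathrm{Tr}(S_k^1)]/n$, close to $\mathbb{E}[s_{n,m}^1(z)]$ by interlacing; for $k>m$ the non-zero entries of $\alpha_k$ are independent across the $m$ copies of $X$, so the cross-terms vanish and the expectation reduces to $\mathbb{E}[\mathrm{Tr}(S_k^2)]/n\sim c_{n,m}\mathbb{E}[s_{n,m}^2(z)]$. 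Writing $\beta_k=z+\mathbb{E}[\,\cdot\,]-\epsilon_k$ and expanding $1/\beta_k$ to first order then leads, after taking expectations, to the perturbed system
\begin{equation*}
\mathbb{E}[s_{n,m}^1]=-\frac{1}{z+c_{n,m}\mathbb{E}[s_{n,m}^2]}+\mathbb{E}[\delta_1],\qquad \mathbb{E}[s_{n,m}^2]=-\frac{1}{z+\mathbb{E}[s_{n,m}^1]}+\mathbb{E}[\delta_2],
\end{equation*}
whose unperturbed solution $(s^1,s^2)$ satisfies exactly the quadratic equations announced.

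\emph{Step 2: concentration of the error terms.} Each $\epsilon_k$ is the fluctuation of a quadratic form $X^*AX/n$, with $A$ essentially $S_k^1$ (resp.\ $S_k^2$). In the correlated case $k\le m$, this is where Hypothesis~\ref{hypo1}\eqref{hess} is decisive: strict log-concavity implies, via the Brascamp--Lieb/Poincar\'e inequality, $\mathrm{Var}(X^*AX)\le C\,\|A\|_{\mathrm{HS}}^2$ for any deterministic symmetric $A$. Conditioning on $W(k)$ to make $S_k^1$ deterministic and then integrating yields $\mathbb{E}[|\epsilon_k|^2]\lesssim 1/(n|\Im z|^2)$; the case $k>m$ is similar but easier. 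Since $|\beta_k|\ge|\Im z|$ and $|z+\mathbb{E}[\,\cdot\,]|\ge|\Im z|$, Cauchy--Schwarz on the sums defining $\delta_i$ then produces $\mathbb{E}[|\delta_i(z)|^2]\lesssim n^{-1}|\Im z|^{-2\alpha}$ for some explicit small $\alpha$.

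\emph{Step 3: stability and $L^2$-convergence.} The Jacobian of $(u,v)\mapsto(-1/(z+cv),-1/(z+u))$ at $(s^1,s^2)$ is invertible on $\mathbb{C}_{v_0}$ with inverse of norm polynomial in $v_0^{-1}$; this turns the $\delta_i$-estimate of Step~2 into $|\mathbb{E}[s_{n,m}^i(z)]-s^i(z)|=O(n^{-1/2}v_0^{-\alpha'})$ for some $\alpha'$. The second ingredient is concentration of $s_{n,m}^i(z)$ around its expectation, obtained by applying log-concavity once more to the map $X\mapsto s_{n,m}^i(z)$; its Lipschitz constant is controlled by the resolvent identity as $O(1/(n^{1/2}|\Im z|))$, yielding $\mathrm{Var}(s_{n,m}^i(z))=O(1/(n v_0^4))$. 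Combining the mean estimate with this variance bound gives $L^2$-convergence, and bookkeeping of the accumulated powers of $v_0^{-1}$ through both steps produces precisely the restriction $v_0\ge n^{-1/13}$. The formula for $s(z)=(s^1+cs^2)/(1+c)$ follows from $s_{n,m}=(ns_{n,m}^1+ms_{n,m}^2)/(n+m)$.

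\emph{Main obstacle.} The critical step is the quadratic-form concentration in the correlated case $k\le m$: standard independence tricks are unavailable, and one must use that strict log-concavity of $V$ delivers a Poincar\'e-type inequality with a uniform constant, while the symmetry assumption provides the cancellations required to identify the leading order and match it with $\mathbb{E}[s^1_{n,m}(z)]$. Once this concentration is in hand, the rest is a careful but essentially standard stability analysis of the limit system, and the somewhat unusual exponent $1/13$ merely reflects the worst polynomial blow-up in $v_0^{-1}$ accumulated through the above steps.
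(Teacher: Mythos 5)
Your plan is correct and follows essentially the same route as the paper: Schur-complement self-consistent equations for $\mathbb{E}[s^1_{n,m}]$ and $\mathbb{E}[s^2_{n,m}]$, quadratic-form concentration from the log-concavity hypothesis to control $\epsilon_k$ and hence $\mathbb{E}[\delta_i]=O(v_0^{-\alpha}/n)$, a stability/Taylor expansion of the limiting system, and a final concentration bound on $s_{n,m}(z)$ itself for the $L^2$ statement. The only cosmetic differences are that the paper bounds $\mathrm{Tr}(S_1)-\mathrm{Tr}((S_k)_1)$ by an explicit rank-one resolvent identity rather than interlacing, and gets the sharper variance $O(v_0^{-4}n^{-2})$ from concentration of the spectral measure rather than your $O(n^{-1}v_0^{-4})$, neither of which changes the argument.
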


\begin{proof}

By Lemma \ref{Schur} in the appendix, we have $$\mathrm{Tr}(S_1) = - \sum_{k=m+1}^{n+m} \frac{1}{\beta_k}$$ where, for $k > m$, we have denoted
 $$\frac{1}{\beta_k} = \frac{1}{z + c_{n,m} \mathbb{E}[s_{n,m}^2(z)] - \epsilon_k} = \frac{1}{z + c_{n,m} \mathbb{E}[s_{n,m}^2(z)]} + \frac{\epsilon_k}{(z + c_{n,m} \mathbb{E}[s_{n,m}^2(z)] - \epsilon_k)(z + c_{n,m} \mathbb{E}[s_{n,m}^2(z)])}.$$
Thus, $$s_{n,m}^1(z) = - \frac{1}{z+ c_{n,m} \mathbb{E}[s_{n,m}^2(z)]} + \delta_1(z).$$

By the same method, we obtain $$s_{n,m}^2(z) = - \frac{1}{z+ \mathbb{E}[s_{n,m}^1(z)]} + \delta_2(z).$$

We can deduce, by taking the mean in both previous equalities, that $\mathbb{E}[s_{n,m}^1(z)]$ and $\mathbb{E}[s_{n,m}^2(z)]$ are solutions of the following system $ \begin{cases} \mathbb{E}[s_{n,m}^1(z)] =  - \frac{1}{z + c_{n,m} \mathbb{E}[s_{n,m}^2(z)]} + \mathbb{E}[\delta_1(z)] \\ \mathbb{E}[s_{n,m}^2(z)] =  - \frac{1}{z + \mathbb{E}[s_{n,m}^1(z)]} + \mathbb{E}[\delta_2(z)] \end{cases}.$

Thus, $\mathbb{E}[s_{n,m}^1(z)]$ is a solution of the quadratic equation $$a_2 \mathbb{E}[s_{n,m}^1(z)]^2 + a_1 \mathbb{E}[s_{n,m}^1(z)] + a_0 = 0$$ with \begin{align} & a_2 = z + c_{n,m} \mathbb{E}[\delta_2(z)]\nonumber \\ & a_1 = z^2 -c_{n,m} +1 +z c_{n,m} \mathbb{E}[\delta_2(z)]-z \mathbb{E}[\delta_1(z)]-c_{n,m} \mathbb{E}[\delta_1(z)]\mathbb{E}[\delta_2(z)]\label{toto}\\ & a_0 = z - z^2 \mathbb{E}[\delta_1(z)] + c_{n,m} \mathbb{E}[\delta_1(z)] - z c_{n,m} \mathbb{E}[\delta_1(z)]\mathbb{E}[\delta_2(z)]. \nonumber\end{align}
We deduce  that \begin{equation}\label{pok} \mathbb{E}[s_{n,m}^1(z)]  = -\frac{\left(a_1 - \sqrt{a_1^2 - 4 a_0 a_2}\right)}{2 a_2}.\end{equation}

%By the same method, we have $$\mathbb{E}[s_{n,m}^2(z)]  = -\frac{\left(b_1 - \sqrt{b_1^2 - 4 b_0 b_2}\right)}{2 b_2}$$ where \begin{align*} & b_2 = c_{n,m}z + c_{n,m} \mathbb{E}[\delta_1(z)] \\ & b_1 = z^2 +c_{n,m} -1 -z c_{n,m} \mathbb{E}[\delta_2(z)]+z \mathbb{E}[\delta_1(z)]-c_{n,m} \mathbb{E}[\delta_1(z)]\mathbb{E}[\delta_2(z)]\\ & b_0 = z - z^2 \mathbb{E}[\delta_2(z)] + \mathbb{E}[\delta_2(z)] - z \mathbb{E}[\delta_1(z)]\mathbb{E}[\delta_2(z)]. \end{align*}

We only need to estimate $a_0$, $a_1$ and $a_2$  to find the limit of $\mathbb{E}[s_{n,m}^1(z)]$. We now  show that $\mathbb{E}[\delta_1(z)] = O(\frac{1}{n})$ and $\mathbb{E}[\delta_2(z)] = O(\frac{1}{n})$.   Using the fact that, for $k \le m$, $$\frac{1}{\beta_k} = \frac{1}{z + \mathbb{E}[s_{n,m}^1(z)] - \epsilon_k} = \frac{1}{z + \mathbb{E}[s_{n,m}^1(z)]} + \frac{\epsilon_k}{(z + \mathbb{E}[s_{n,m}^1(z)] - \epsilon_k)(z + \mathbb{E}[s_{n,m}^1(z)])},$$ we can write $$  \mathbb{E}[\delta_2(z)] =  - \frac{1}{m} \sum_{k=1}^{m}\mathbb{E} \left[ \frac{\epsilon_k}{(z+\mathbb{E}[s_{n,m}^1(z)])^2} + \frac{\epsilon_k^2}{\beta_k (z+ \mathbb{E}[s_{n,m}^1(z)])^2} \right] .$$
On the other hand, we have $| \beta_k | \ge | \Im(\beta_k) | = | \Im(z) | \ge v_0$ and  $| z+\mathbb{E}[s_{n,m}^1(z)] | \ge v_0$,  which yields

\begin{equation}\label{lkj}  | \mathbb{E}[\delta_2(z)]| \le \frac{v_0^{-2} + v_0^{-3}}{m} \sum_{k=1}^{m}( | \mathbb{E}[\epsilon_k] | + \mathbb{E}[\epsilon_k^2])\,.\end{equation}
To estimate the first term note that we have 
\begin{equation}\label{lkj2}
\mathbb{E}[\epsilon_k] = \frac{1}{n}\mathrm{Tr} (S_1 - (S_k)_1)\,.\end{equation}
 To compute this term, we will use the following equality : 
$$ {\begin{pmatrix} A & B \\ C & D \end{pmatrix}}^{-1} = \begin{pmatrix} A^{-1} + A^{-1} B F^{-1} C A^{-1} & -A^{-1} B F^{-1} \\ -F^{-1} C A^{-1} & F^{-1} \end{pmatrix},$$ where  $F=D - C A^{-1} B$.
Since $S_1$ is the $n \times n$ block in the right bottom, if we let $A$,$B$ and $C$ be $A=D=-zI_n$, $B=Y_{n,m}^*$ and $C=Y_{n,m}$, we have $S_1 = F^{-1} = -zI_n + z^{-1}Y_{n,m}Y_{n,m}^*$. Therefore, we have
 $$\mathrm{Tr} (S_1)=  \mathrm{Tr}(z^{-1}Y_{n,m} Y_{n,m}^* - z)^{-1}.$$

Using the notation $Y_{n,m}(k)$ to denote the submatrix of $Y_{n,m}$ obtained by deleting its $k$-th column (its size is $n \times (n-1)$), we find by a similar method $$ \mathrm{Tr}(S_k)_1 =   \mathrm{Tr}(z^{-1} Y_{n,m}(k) Y_{n,m}(k)^* - z)^{-1}.$$ 
Therefore, we deduce that $$\mathrm{Tr} (S_1 - (S_k)_1) =  z \left[ \mathrm{Tr}(Y_{n,m} Y_{n,m}^* - z^2)^{-1} - \mathrm{Tr}(Y_{n,m}(k) Y_{n,m}(k)^* - z^2)^{-1} \right].$$
But,  $Y_{n,m}(k) Y_{n,m}(k)^* = Y_{n,m} Y_{n,m}^* - \frac{1}{n} \alpha_k \alpha_k^*$, so that we  have proved the equality
\begin{equation}\label{lkj3}
\mathrm{Tr} (S_1 - (S_k)_1) = - z \frac{1}{n} \alpha_k^* (Y_{n,m} Y_{n,m}^* - z^2)^{-1}(Y_{n,m}(k) Y_{n,m}(k)^* - z^2)^{-1} \alpha_k.\end{equation}

Since $Y_{n,m} Y_{n,m}^*$ is a non-negative hermitian matrix, its eigenvalues are non-negative. The eigenvalues of $(Y_{n,m} Y_{n,m}^* - z^2)^{-1}$ can be written as $1/(\lambda - z^2)$ or, as we prefer, $1/(\sqrt{\lambda} - z)(\sqrt{\lambda} + z)$. Thus, the absolute value of the eigenvalues of $(Y_{n,m} Y_{n,m}^* - z^2)^{-1}$ are all bounded by $v_0^{-2}$. The same reasoning holds for $(Y_{n,m}(k) Y_{n,m}(k)^* - z^2)^{-1}$. Therefore, we deduce from \eqref{lkj3} that

$$| \mathbb{E}[\mathrm{Tr} (S_1 - (S_k)_1)] | \le |z| v_0^{-4} \frac{1}{n}\mathbb{E}[\|\alpha_k\|^2].$$
But, by Hypothesis \ref{hypo1}, we have $$\mathbb{E}[\|\alpha_k\|^2] = \mathbb{E}[ \sum_{i=1}^n X_i^2] = n.$$
Thus, we have shown according to \eqref{lkj2} that
 $$|\mathbb{E}[ \epsilon_k]| \le \frac{1}{n} R v_0^{-4}$$
Moreover, $$\mathbb{E}[\epsilon_k^2] \le 2 \left( \mathbb{E}\left[ \frac{1}{n^2}\left| \alpha_k^* S_k \alpha_k - \mathrm{Tr}(S_k)_1\right| ^2 \right] +\mathbb{E}\left[ \frac{1}{n^2}\left| \mathrm{Tr}((S_k)_1-S_1) \right| ^2 \right] \right).$$
The concentration of measure  Theorem \ref{concentrationthm} states that the first term is of order  $O(n^{-1})$. For the second one, by the previous computation, we deduce that
 \begin{equation}\label{pol}
\left| \mathrm{Tr}((S_k)_1-S_1) \right| ^2 \le R^2 v_0^{-8} \frac{1}{n^2} \|\alpha_k\|^4.\end{equation}
Since $\mathbb{E}[\frac{1}{n^2} \|\alpha_k\|^4] = O(1)$, we conclude that  $\mathbb{E}[\epsilon_k^2] = O(v_0^{-8}n^{-1})$.

We have now shown according to \eqref{lkj} that \begin{equation}\label{poke}\mathbb{E}[\delta_2(z)] = O(\frac{v_0^{-12}}{n}).\end{equation}
We find  a similar bound for $\mathbb{E}[\delta_1(z)] $.
Since $\mathbb{E}[\delta_i(z)] = O(v_0^{-12}n^{-1})$, $i=1,2$,   a Taylor expansion gives us 

\begin{eqnarray}
\mathbb{E}[s_{n,m}^1(z)] &=& - \frac{1}{2 z} \left(z^2-c+1 - \sqrt{(z^2-c+1)^2-4z^2} \right) + O(\frac{v_0^{-12}}{n}).\label{pol2}\\
\mathbb{E}[s_{n,m}^2(z)] &=& - \frac{1}{2 c z} \left(z^2+c-1 - \sqrt{(z^2-1+c)^2-4cz^2} \right) + O(\frac{v_0^{-12}}{n}).\nonumber\end{eqnarray}

Since $\mathbb{E}[s_{n,m}(z)] = \frac{n \mathbb{E}[s_{n,m}^1(z)] + m  \mathbb{E}[s_{n,m}^2(z)]}{n+m}$, we conclude that  $$\mathbb{E}[s_{n,m}(z)]  = \frac{s^1(z) + c s^2(z)}{1 + c} + O(\frac{v_0^{-12}}{n}).$$
Hence $\mathbb{E}[s_{n,m}(z)] $ converges towards $s(z)$, uniformly on $\mathbb C_{v_0}$ for $v_0\ge n^{-\frac{1}{13}}$. 

Now, for the convergence of $s_{n,m}(z)$ to $s(z)$ in $L_2$, let $f$ be the function $f : x \mapsto (x-z)^{-1}$. $f$ is a Lipschitz function, whose Lipschitz constant is bounded by $v_0^{-2}$. Corollary \ref{concspectral} states that the variance of $\mathrm{Tr}(S)$ is uniformly bounded for all $(n,m)$. From that, we see that $$ \mathbb{E}[|s_{n,m}(z) - \mathbb{E}[s_{n,m}(z)]|^2] = O(v_0^{-4}n^{-2}).$$

Thus, $s_{n,m}(z)$ converges to $s(z)$ in $L_2$.

\end{proof}

\begin{thm}\label{THMspeed} For $v_0\ge n^{-\frac{1}{16}}$, uniformly on 
$ z \in \mathbb{C}_{v_0}$,  $$\mathbb{E}[ {\max_{i \le m} | S_{ii} - s^2(z) |} ^2] = O(v_0^{-16}n^{-1})\quad\mbox{ and }\quad\mathbb{E}[ {\max_{i > m} | S_{ii} - s^1(z) |} ^2]=O(v_0^{-16}n^{-1})\,.$$
\end{thm}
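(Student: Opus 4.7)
The plan is to combine the Schur complement identity (already used in the proof of Theorem \ref{THMMPlaw}) with the limiting relation $s^2(z)=-(z+s^1(z))^{-1}$ to express the difference $S_{ii}-s^2(z)$ in terms of the small quantities $\mathbb{E}[s_{n,m}^1(z)]-s^1(z)$ and $\epsilon_i$, and then to control $\max_i|\epsilon_i|$ uniformly. For $i\le m$, Schur gives $S_{ii}=-1/\beta_i$ with $\beta_i=z+\mathbb{E}[s_{n,m}^1(z)]-\epsilon_i$ (by the definition \eqref{defeps}), so a direct computation yields
\begin{equation*}
S_{ii}-s^2(z)=\frac{\bigl(\mathbb{E}[s_{n,m}^1(z)]-s^1(z)\bigr)-\epsilon_i}{\bigl(z+\mathbb{E}[s_{n,m}^1(z)]-\epsilon_i\bigr)\bigl(z+s^1(z)\bigr)}\,.
\end{equation*}
Both factors in the denominator have modulus at least $v_0$: the second because $s^1$ is a Stieltjes transform, so $\Im\bigl(z+s^1(z)\bigr)$ has the same sign as $\Im z$ and modulus $\ge v_0$; the first because $\Im S_i\succeq 0$ forces $\Im(\alpha_i^* S_i\alpha_i)\ge 0$, hence $|\Im\beta_i|\ge|\Im z|\ge v_0$. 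Squaring and taking the maximum over $i\le m$, we obtain
\begin{equation*}
\max_{i\le m}|S_{ii}-s^2(z)|^2 \le 2\,v_0^{-4}\bigl(|\mathbb{E}[s_{n,m}^1(z)]-s^1(z)|^2+\max_{i\le m}|\epsilon_i|^2\bigr).
\end{equation*}
The deterministic first summand is $O(v_0^{-24}/n^2)$ by \eqref{pol2}, which is well below $v_0^{-16}/n$ for $v_0\ge n^{-1/16}$. So the theorem reduces to proving $\mathbb{E}[\max_{i\le m}|\epsilon_i|^2]=O(v_0^{-12}/n)$.

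To bound this maximum, I would decompose $\epsilon_i=\epsilon_i^{(a)}+\epsilon_i^{(b)}$, where $\epsilon_i^{(a)}$ is the fluctuation of the quadratic form $\tfrac{1}{n}\alpha_i^*S_i\alpha_i$ around its conditional expectation given $\{X^{(j)}\}_{j\ne i}$ (which equals a multiple of $\mathrm{Tr}((S_i)_1)$ since $\mathbb{E}[XX^*]=I_n$), and $\epsilon_i^{(b)}$ is the residual term, measurable with respect to the $X^{(j)}$, $j\ne i$. For $\epsilon_i^{(a)}$, the strict log-concavity in Hypothesis \ref{hypo1} delivers a Hanson--Wright-type concentration inequality: combined with $\|(S_i)_1\|_{\mathrm{op}}\le v_0^{-1}$ and $\|(S_i)_1\|_{HS}^2\le n v_0^{-2}$, this gives a sub-exponential tail
\begin{equation*}
\mathbb{P}\bigl(|\epsilon_i^{(a)}|\ge t\bigr)\le 2\exp\bigl(-c\,\min(n v_0^4 t^2,\,n v_0^2 t)\bigr),
\end{equation*}
uniformly in $i$. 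For $\epsilon_i^{(b)}$, the rank-one identity \eqref{lkj3} together with spectral concentration (Corollary \ref{concspectral}) and the log-concave concentration of $\|X^{(i)}\|^2$ around $n$ provides exponential tails of the same order. A union bound over $i\le m=O(n)$ followed by integration of the tail yields $\mathbb{E}[\max_i|\epsilon_i|^2]=O(v_0^{-4}(\log n)/n)$, which is $o(v_0^{-12}/n)$ in the regime $v_0\ge n^{-1/16}$. Combining everything, $\mathbb{E}[\max_{i\le m}|S_{ii}-s^2(z)|^2]=O(v_0^{-16}/n)$. The case $i>m$ is handled identically after swapping the roles of $s^1$ and $s^2$ (and $\mathbb{E}[s_{n,m}^1]$, $\mathbb{E}[s_{n,m}^2]$) and accounting for the factor $c_{n,m}=m/n\to c$ appearing in \eqref{defeps}.

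The main obstacle is the passage from per-index variance control (which would give only $O(v_0^{-?}/n)$ for a single $i$) to a maximum over $\Theta(n)$ indices: this forces the use of the \emph{exponential}, rather than merely $L^2$, concentration of quadratic forms, which is available precisely because of the strict log-concavity assumption of Hypothesis \ref{hypo1}. The coarse exponent $16$ in the theorem is chosen exactly to absorb both the $\log n$ from the union bound and the accumulated powers of $v_0^{-1}$ coming from the two denominators in the expression for $S_{ii}-s^2(z)$ and from the operator and Hilbert-Schmidt norm bounds on $(S_i)_1$ entering the Hanson--Wright estimate.
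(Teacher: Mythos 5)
Your expansion of $S_{ii}-s^2(z)$ via the Schur complement, the lower bounds $|\beta_i|\ge v_0$ and $|z+s^1(z)|\ge v_0$, and the splitting of the numerator into the quadratic-form fluctuation $\frac{1}{n}(\alpha_i^*S_i\alpha_i-\mathrm{Tr}(S_i)_1)$, the rank-one trace difference $\frac{1}{n}\mathrm{Tr}(S_1-(S_i)_1)$, and the deterministic bias $\mathbb{E}[s_{n,m}^1(z)]-s^1(z)$ is exactly the paper's argument; the estimates you invoke (Theorem \ref{concentrationthm} for the quadratic form, the identity \eqref{lkj3} for the rank-one term, and \eqref{pol2} for the bias) are the same ones the paper uses, and your accounting of the powers of $v_0^{-1}$ is consistent with the stated exponent.

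Where you genuinely diverge is in taking the maximum over $i$ literally. The paper does not: its proof only establishes $\mathbb{E}[|S_{ii}-s^2(z)|^2]=O(v_0^{-16}n^{-1})$ with constants uniform in $i$, i.e.\ it effectively proves $\max_i\mathbb{E}[\cdot]$ rather than $\mathbb{E}[\max_i\cdot]$, and this per-index form is all that is used downstream (e.g.\ in the proof of Theorem \ref{THM2cltformula}). Your union-bound/Hanson--Wright upgrade is the correct way to handle the literal statement and is a real addition. However, your final bookkeeping has a slip: the claim that $v_0^{-4}(\log n)/n=o\bigl(v_0^{-12}/n\bigr)$ for all $v_0\ge n^{-1/16}$ fails when $v_0$ is of order one, since then $v_0^{-8}$ is bounded and cannot absorb the $\log n$ coming from the maximum of $\Theta(n)$ sub-exponential variables. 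So either you adopt the per-index reading of the statement (as the paper implicitly does), or you must accept an extra $\log n$ factor for $v_0\asymp 1$; in the regime $v_0=n^{-c}$ with $c>0$, which is the one that matters for the applications, your argument does close.
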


\begin{proof}
Let $i \le m$. By definition, we have \\ \begin{align*} S_{ii} \: &= \: - \frac{1}{z + \frac{1}{n} \alpha_i^* S_i \alpha_i} \\ &= \frac{1}{-z - s^1(z)} + \frac{- s^1(z) +  \frac{1}{n} \alpha_i^* S_i \alpha_i}{(-z - \frac{1}{n} \alpha_i^* S_i \alpha_i)(-z-s^1(z))}. \end{align*}

But, we know that $s^2(z)= (-z - s^1(z))^{-1}$. We thus deduce that $$ S_{ii} - s^2(z) =  \frac{- s^1(z) +  \frac{1}{n} \alpha_i^* S_i \alpha_i}{(-z - \frac{1}{n} \alpha_i^* S_i \alpha_i)(-z-s^1(z))}.$$

Since $z \in \mathbb{C}_{v_0}$, there exist $K > 0$ such that $|(-z - \frac{1}{n} \alpha_i^* S_i \alpha_i)(-z-s^1(z))| > Kv_0^2$. Therefore, we have \\ \begin{align*} | S_{ii} - s^2(z) | &\le K v_0^{-2} |- s^1(z) +  \frac{1}{n} \alpha_i^* S_i \alpha_i | \\ & \le K v_0^{-2} ( \frac{1}{n} | \alpha_i^* S_i \alpha_i - \mathbb{E}[ \mathrm{Tr}(S_i)_1 ] | +  \frac{1}{n} | \mathbb{E}[ \mathrm{Tr}((S_i)_1 - S_1) ] | +  | \mathbb{E}[s_{n,m}^1(z)] - s^1(z) |. \end{align*}

Both last terms converge uniformly to $0$ for all $i$ in $L_2$ (this result has been proved previously) in $O(v_0^{-12}n^{-1})$. For the first one, using the concentration inequality, we can show that $L^2$ norm of the first term is in $O(v_0^{-2}n^{-1})$, where the $O(v_0^{-2}n^{-1})$ is uniform for all $i$ (indeed, the constants are independent of $i$), which concludes the proof for $i \le m$. The proof is similar for $i>m$.

Therefore, for $i > m$, $\mathbb{E}[| S_{ii} - s^1(z) |^2] = O(v_0^{-16}n^{-1})$, where the upper bound is uniform for all $i$.

\end{proof}

\section{Convergence of the process $M_{n,m}(z)$}

We are going to compute the limit of the  function of $M_{n,m}(z)$.

\begin{thm}\label{THM2cltformula} Under the hypotheses \ref{hypo1},\ref{hypo2}  and \ref{hypo3}, uniformly on  $z \in \mathbb{C}_{v_0}$, $v_0\ge n^{-1/20}$,  the function $M_{n,m}(z)$ converges  to the function $M$ defined by \begin{align*} M(z) := & \sigma s^2(z) + (1 + c \partial_z s^2(z)) (s^1(z))^3 \left[  - \frac{\partial_z s^1(z)}{ (s^1(z))^2} + c \left[ (4\mu -5)(s^2(z))^2 + 2\partial_z s^2(z)\right] \right] \\ & +  (1 + \partial_z s^1(z)) c (s^2(z))^3 \left[ - \frac{\partial_z s^2(z)}{(s^2(z))^2} +  \left[ 2(\mu - 2 + \frac{\kappa}{2})(s^1(z))^2 + 2 \partial_z s^1(z)\right] \right] \end{align*} where \begin{align*} & s^1(z) = - \frac{1}{2 z} \left(z^2-c+1 - \sqrt{(z^2-c+1)^2-4z^2} \right)\\ & s^2(z) = - \frac{1}{2 c z} \left(z^2+c-1 - \sqrt{(z^2-1+c)^2-4cz^2} \right) \end{align*}
\end{thm}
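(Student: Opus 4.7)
The plan is to refine the rough estimate $\mathbb{E}[\delta_i(z)]=O(v_0^{-12}/n)$ obtained inside the proof of Theorem \ref{THMMPlaw} to a precise expansion $\mathbb{E}[\delta_i(z)]=d^i(z)/n+o(1/n)$, then linearize the system for $(\mathbb{E}[s_{n,m}^1],\mathbb{E}[s_{n,m}^2])$ around the deterministic solution $(s^1,s^2)$ and read off the limit of $M_{n,m}(z)$. Continuing the geometric expansion of $1/\beta_k$ one step further than in Theorem \ref{THMMPlaw} gives, by exchangeability,
$$\mathbb{E}[\delta_2(z)]=-\frac{\mathbb{E}[\epsilon_1]}{(z+\mathbb{E}[s_{n,m}^1(z)])^2}-\frac{\mathbb{E}[\epsilon_1^2]}{(z+\mathbb{E}[s_{n,m}^1(z)])^3}+o(1/n),$$
and an analogous formula for $\mathbb{E}[\delta_1(z)]$; the remainder involves $\mathbb{E}[|\epsilon_k|^3]=O(n^{-3/2})$ by the concentration inequalities recalled in the appendix.

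The bias $\mathbb{E}[\epsilon_k]$ is handled by pushing the Sherman--Morrison identity from the proof of Theorem \ref{THMMPlaw} one order further. Using $YY^*=Y(k)Y(k)^*+\frac1n\alpha_k\alpha_k^T$ and $(S_k)_1=z(Y(k)Y(k)^*-z^2)^{-1}$, one obtains
$$\mathrm{Tr}(S_1-(S_k)_1)=-\frac{(1/n)\alpha_k^T(S_k)_1^2\alpha_k}{\beta_k}.$$
Taking expectation conditionally on $(S_k)_1$ and using $\mathbb{E}[\alpha_k^T (S_k)_1^2\alpha_k\,|\,(S_k)_1]=\mathrm{Tr}((S_k)_1^2)$ together with the identity $\mathrm{Tr}((S_k)_1^2)/n\to(\partial_z s^1-s^1/z)/2$ (obtained by differentiating the identity $S_1=z(YY^*-z^2)^{-1}$) and Theorem \ref{THMspeed} for the concentration of $\beta_k$, one identifies $n\mathbb{E}[\epsilon_k]$ in closed form. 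The mirror computation with $Y^*Y$ and $(S_k)_2$ handles $k>m$.

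For $\mathbb{E}[\epsilon_k^2]$ the fourth moments of $X$ enter through the quadratic form identity
$$\mathrm{Var}(\alpha_k^T B\alpha_k\,|\,B)=2\mathrm{Tr}(B^2)+\sum_{i,j,p,q}B_{ij}B_{pq}\,\kappa_4(X^i,X^j,X^p,X^q).$$
Under Hypothesis \ref{hypo1}, symmetry eliminates cumulants with an odd number of occurrences of any coordinate, leaving the diagonal contribution $\kappa_4^{iiii}=\mu-3$ and the three pair patterns $\kappa_4^{iijj}=\kappa_4^{ijij}=\kappa_4^{ijji}=\tau_{ij}:=\mathbb{E}[X_i^2X_j^2]-1$. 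By exchangeability $\tau_{ij}$ is constant for $i\ne j$, and Hypothesis \ref{hypo2} identifies $\sum_{i,j}\tau_{ij}=n\kappa+o(n)$, so $\tau_{ij}=O(1/n)$ for $i\ne j$. Combining this with $B_{ii}\to s^1(z)$ from Theorem \ref{THMspeed} (for $k\le m$) and the trivial bound $\sum_{i\ne j}B_{ij}^2\le\mathrm{Tr}(B^2)=O(n)$ yields $n\mathbb{E}[\epsilon_k^2]$ in terms of $s^1,s^2,\partial_z s^1,\mu$ and $\kappa$. For $k>m$ the coordinates of $\alpha_k$ come from different independent copies of $X$, so only the $\mu-3$ part of the cumulant survives (no $\kappa$ dependence).

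Writing $\mathbb{E}[s_{n,m}^i(z)]=s^i(z)+r^i(z)/n+o(1/n)$ and $c_{n,m}=c-\sigma/n+o(1/n)$ and Taylor-expanding the system to first order gives
$$r^1=c(s^1)^2 r^2-\sigma(s^1)^2 s^2+d^1,\qquad r^2=(s^2)^2 r^1+d^2,$$
which is solved explicitly for $(r^1,r^2)$; the limit $M(z)=r^1+cr^2+\sigma(s^1-s^2)/(1+c)$ then collapses, using the resolvent identities $\partial_z s^1/(s^1)^2=1+c\partial_z s^2$ and $\partial_z s^2/(s^2)^2=1+\partial_z s^1$ obtained by differentiating the self-consistent equations, into the form stated in the theorem. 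The main obstacle is the combinatorial accounting of the fourth-cumulant contributions in $\mathbb{E}[\epsilon_k^2]$: one must separate the diagonal $(\mu-3)$ piece from the pair $(\tau_{ij})$ piece, argue that the off-diagonal sum $\sum_{i\ne j}B_{ij}^2\tau_{ij}$ is negligible because $\tau_{ij}=O(1/n)$, and ensure all remaining cumulants are controlled by the strict log-concavity of Hypothesis \ref{hypo1}. Uniformity over $\mathbb{C}_{v_0}$ follows from tracking the powers of $v_0^{-1}$ that each Sherman--Morrison, concentration estimate and application of Theorem \ref{THMspeed} contributes; choosing $v_0\ge n^{-1/20}$ keeps every remainder $o(1)$.
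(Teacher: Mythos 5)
Your proposal follows the same backbone as the paper's proof: expand $1/\beta_k$ one order further so that $\mathbb{E}[\delta_i(z)]$ is governed by $n\mathbb{E}[\epsilon_k]$ and $n\mathbb{E}[\epsilon_k^2]$ up to an $O(n^{-1/2})$ remainder controlled by \eqref{borneeps}; identify $n\mathbb{E}[\epsilon_k]$ through a rank-one (Sherman--Morrison) identity; identify $n\mathbb{E}[\epsilon_k^2]$ through the variance of the quadratic form $\frac1n\alpha_k^*S_k\alpha_k$, which is where $\mu$ and $\kappa$ enter; and finally linearize the self-consistent system --- your implicit linearization of the $2\times 2$ system is equivalent to the paper's explicit Taylor expansion of the quadratic formula \eqref{pok}. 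Your cumulant bookkeeping of $\mathrm{Var}(\alpha_k^*B\alpha_k\mid B)$ is a cleaner organization than the paper's six-term decomposition \eqref{decomp}, and your identification $\tau_{ij}=O(1/n)$ with $\sum_{i\neq j}\tau_{ij}= n(\kappa-\mu+1)+o(n)$ is exactly how the paper extracts $\kappa$ from the term $T_2^n$.

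There is, however, a genuine gap in the sentence ``symmetry eliminates cumulants with an odd number of occurrences of any coordinate.'' Hypothesis \ref{hypo1} asserts only \emph{exchangeability} (invariance under permutations of the coordinates), not coordinate-wise sign symmetry, so the mixed moments $\mathbb{E}[X_i^2X_pX_q]$, $\mathbb{E}[X_i^3X_j]$ and $\mathbb{E}[X_iX_jX_pX_q]$ (distinct indices) need not vanish; exchangeability and concentration only make them small, of order $n^{-1}$ or $n^{-2}$. These are precisely the terms $T_3^n, T_4^n, T_6^n$ in the paper, and their total contribution involves sums such as $\sum_{i}\sum_{p\neq q}\mathbb{E}[B_{ii}B_{pq}]\,\mathbb{E}[X_i^2X_pX_q]$ over $\sim n^3$ (or $n^4$) index tuples. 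A naive Cauchy--Schwarz bound $|\mathbb{E}[B_{ii}B_{pq}]|=O(n^{-1/2})$ is \emph{not} sufficient to make these $o(n)$; one needs the refined correlation bounds $\mathbb{E}[B_{ii}B_{pq}]=O(n^{-3/2})$ and $\mathbb{E}[B_{ij}B_{pq}]=O(n^{-3/2})$, which the paper establishes through a separate resolvent-expansion argument (the estimates \eqref{b4} and \eqref{b3} built on the quantities $K_{ij}^{(\mathbb{T})}$ and the identities \eqref{la1}--\eqref{la3}). This is a substantive missing piece of your argument, not a routine detail.

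One further point to reconcile: your identity $\frac1n\mathrm{Tr}((S_k)_1^2)\to \frac12(\partial_z s^1-s^1/z)$ is the correct one for the block $(z^{-1}YY^*-z)^{-1}$ (which is not a resolvent, so $\mathrm{Tr}(S_1^2)\neq\partial_z\mathrm{Tr}(S_1)$), whereas the paper's derivation of $n\mathbb{E}[\epsilon_k]\to\partial_z s^2/s^2$ and of the term $T^n_{121}$ treats $(S_k)_1$ as if it were a full resolvent. Since the displayed formula for $M(z)$ is assembled from the latter convention, if you carry your (algebraically correct) identity through you will not land on the stated expression without further manipulation; you should check carefully which normalization of $\mathrm{Tr}(B^2)$ and of $n\mathbb{E}[\epsilon_k]$ is consistent with the target $M(z)$ before claiming the theorem as stated.
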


\begin{proof}
Recall that $M_{n,m}(z) = (n+m) (\mathbb{E}[s_{n,m}(z)] - s(z))$  can be written $$M_{n,m}(z)=n(\mathbb{E}[s_{n,m}^1(z)] - s^1(z)) + m(\mathbb{E}[s_{n,m}^2(z)] - s^2(z)) + o(1).$$

Recall that we have from \eqref{toto} that $$ \mathbb{E}[s_{n,m}^1(z)]  = -\frac{\left(a_1 - \sqrt{a_1^2 - 4 a_0 a_2}\right)}{2 a_2}$$
with \begin{align*} & a_2 = z + c \mathbb{E}[\delta_2(z)] + O(v_0^{-24}n^{-2})  \\ & a_1 = z^2 - c + 1 + (c-c_{n,m}) + z c \mathbb{E}[\delta_2(z)]  - z \mathbb{E}[\delta_1(z)]  + O(v_0^{-24}n^{-2}) \\ & a_0 = z - z^2\mathbb{E}[\delta_1(z)] +c \mathbb{E}[\delta_1(z)] + O(v_0^{-24}n^{-2}) \end{align*}  

By a Taylor expansion, we deduce
 \begin{align*} \sqrt{\left(\frac{a_1}{a_2}\right)^2 - 4 \frac{a_0}{a_2}} & = \sqrt{\frac{(z^2-c+1)^2}{z^2}-4} \\ & + \frac{1}{\sqrt{\frac{(z^2-c+1)^2}{z^2}-4}} \left[ \frac{(z^2-c+1)}{z} \frac{1}{z^2} \left( z (c-c_{n,m})-z^2 \mathbb{E}[\delta_1(z)]+ c(c-1)\mathbb{E}[\delta_2(z)] \right)\right] \\ & -  \frac{1}{\sqrt{\frac{(z^2-c+1)^2}{z^2}-4}} \left[ \frac{2c}{z}(\mathbb{E}[\delta_1(z)] - \mathbb{E}[\delta_2(z)]) - 2 z \mathbb{E}[\delta_1(z)] \right]  + O(v_0^{-25}n^{-2}). \end{align*}

Therefore, \begin{align*}\mathbb{E}[ s_{n,m}^1(z) & - s^1(z) ]  = - \frac{1}{2 z^2} \left( z (c-c_{n,m})-z^2 \mathbb{E}[\delta_1(z)]+ c(c-1)\mathbb{E}[\delta_2(z)] \right) \\ & + \frac{1}{2 \sqrt{\frac{(z^2-c+1)^2}{z^2}-4}} \left[ \frac{(z^2-c+1)}{z} \frac{1}{z^2} \left( z (c-c_{n,m})-z^2 \mathbb{E}[\delta_1(z)]+ c(c-1)\mathbb{E}[\delta_2(z)] \right)\right] \\ & -  \frac{1}{2 \sqrt{\frac{(z^2-c+1)^2}{z^2}-4}} \left[ \frac{2c}{z}(\mathbb{E}[\delta_1(z)] - \mathbb{E}[\delta_2(z)]) - 2 z \mathbb{E}[\delta_1(z)] \right]  + O(v_0^{-26}n^{-2}). \end{align*} 

For $\mathbb{E}[s_{n,m}^2(z)]$, we have similarly 
  \begin{align*} \mathbb{E}[s_{n,m}^2(z) & -s^2(z)]  = - \frac{1}{2 c^2 z^2} \left(- c^2 z^2 \mathbb{E}[\delta_2(z)] + (c_{n,m} - c)(z-z^3) - c(c-1) \mathbb{E}[\delta_1(z)]\right) \\ & + \frac{1}{2 \sqrt{\frac{(z^2+c-1)^2}{(cz)^2}-\frac{4}{c}}} \left[ \frac{(z^2+c-1)}{cz} \frac{1}{ - c^2 z^2} \left(- c^2 z^2 \mathbb{E}[\delta_2(z)] + (c_{n,m} - c)(z-z^3) - c(c-1) \mathbb{E}[\delta_1(z)]\right) \right] \\ & - \frac{1}{2 \sqrt{\frac{(z^2+c-1)^2}{(cz)^2}-\frac{4}{c}}} \left[ \frac{2}{c^2 z} \left( c (1 - z^2) \mathbb{E}[\delta_2(z)] - (c_{n,m}-c)z - c \mathbb{E}[\delta_1(z)]\right) \right]  + O(v_0^{-26}n^{-2}). \end{align*}

To find the limit of $M_{n,m}(z)$, we only need to find an equivalent of $c-c_{n,m}$, of $\mathbb{E}[\delta_1(z)]$ and of $\mathbb{E}[\delta_2(z)]$.
First, we have by Hypothesis \ref{hypo3}, $(c - c_{n,m}) = \frac{nc-m}{n} \sim \frac{\sigma}{n}.$
To compute an equivalent of $\mathbb{E}[\delta_2(z)]$, we will use the following formula $$\frac{1}{u - \epsilon } = \frac{1}{u} + \frac{\epsilon}{u^2} + \frac{\epsilon ^ 2 }{u^2 (u - \epsilon) }\,.$$
Applied to $\beta_k$, it gives 

$$\frac{1}{\beta_k} = \frac{1}{z+ \mathbb{E}[s_{n,m}^1(z)] - \epsilon_k} = \frac{1}{z+ \mathbb{E}[s_{n,m}^1(z)]} + \frac{\epsilon_k}{(z+ \mathbb{E}[s_{n,m}^1(z)])^2} + \frac{\epsilon_k ^2}{ \beta_k (z+ \mathbb{E}[s_{n,m}^1(z)])^2}.$$
Finally, we can write

\begin{align*}  \delta_2(z)&= - \frac{1}{m}\sum_{k=1}^{m} \frac{\epsilon_k}{(z+ \mathbb{E}[s_{n,m}^{1}(z)])^2} - \frac{1}{m}\sum_{k=1}^{m} \frac{\epsilon_k^2}{(z+ \mathbb{E}[s_{n,m}^{1}(z)])^3} - \frac{1}{m}\sum_{k=1}^{m} \frac{\epsilon_k^3}{ \beta_k (z+ \mathbb{E}[s_{n,m}^{1}(z)])^3} \\ & = S_1 + S_2 + S_3. \end{align*}
Let us find the limit of the expectation of 
each term.

First, since $z \in \mathbb{C}_{v_0}$, by the concentration Theorem \ref{concentrationthm}
applied to $A=S_k(z)$ we find for all $p\in\mathbb N$ a finite constant $C_p$ such that for all $k$
\begin{equation}\label{borneeps}\mathbb E[ |\epsilon_k|^p]\le \frac{C_p}{(\sqrt{n} v_0)^p}\,.\end{equation}
This implies that
 $$| \mathbb{E}[S_3] | \le v_0^{-4} \frac{1}{m} \sum_{k=1}^{m} \mathbb{E}[| \epsilon_k |^3] = O(\frac{v_0^{-7}}{n \sqrt{n}}).$$
Then, for $k \le m$, using a similar computation as done in the proof of Theorem \ref{THMMPlaw}, we have $$\mathbb{E}[\epsilon_k] = \frac{1}{n} \mathbb{E}\left[ \mathrm{Tr} (S_1 - (S_k)_1) \right].$$
By \cite[ Lemma 3.2 ]{EYY}, we have $$\mathrm{Tr} (S_1 - (S_k)_1) = \sum_{i=1}^{n} \frac{(W_{n,m} -z)^{-1}_{ki} (W_{n,m}-z)^{-1}_{ik}}{(W_{n,m}-z)^{-1}_{kk}}=\frac{ (W_{n,m} -z)^{-2}_{kk}}{ (W_{n,m}-z)^{-1}_{kk}} = \frac{((W_{n,m} -z)^{-1}_{kk})'}{(W_{n,m}-z)^{-1}_{kk}}$$  
Therefore, we deduce that
$$n\mathbb E[ \epsilon_k] =\mathbb E[\mathrm{Tr} (S_1 - (S_k)_1) ]=\mathbb E[ \frac{((W_{n,m}-z)^{-1}_{kk})'}{(W_{n,m}-z)^{-1}_{kk} }] $$
Theorem \ref{THMspeed} states that $(W_{n,m}-z)^{-1}_{kk}$ goes to ${s^2(z)}$ in $L_2$ provided $v_0\ge n^{-1/20}$.  Moreover,  we may assume without loss of generality that the eigenvalues 
of $W_{n,m}$ are bounded by some $\Lambda$ as by Lemma \ref{borne} for $\Lambda$ big enough
$$E[1_{\|W_{n,m}\|\ge \Lambda}\epsilon_k]\le 2 nv_0^{-1}e^{-\alpha \Lambda n}\,.$$
On $\|W_{n,m}\|\le \Lambda$, $(W_{n,m}-z)^{-1}_{kk}$ is lower bounded by a constant $C(\Lambda)>0$ and hence $$|n\mathbb E[ \epsilon_k] -\frac{1}{s^2(z)}\partial_z \mathbb E[(W_{n,m}-z)^{-1}_{kk}]|\le  2 nv_0^{-1}e^{-\alpha \Lambda n}
+C(\Lambda)^{-2} v_0^{-2} n^{-1} v_0^{-16}\,.$$
Finally $z\ra \mathbb E[(W_{n,m}-z)^{-1}_{kk}]$ is analytic, and bounded by Theorem \ref{THMspeed}  provided $v_0\ge n^{-1/16}$.  Hence,  writing Cauchy formula, we check that its derivative
converges towards the derivative of $s^2$  for $v_0\ge n^{-1/19}$.
Therefore, we conclude that uniformly on  $v_0\ge n^{-1/19}$, 
$$ n \mathbb{E}[\epsilon_k] - \frac{\partial_z s^2(z)}{s^2(z)}\to 0.$$
The last term to compute is $\mathbb{E}[\epsilon_k^2]$.
We  write $$\mathbb{E}[\epsilon_k^2] = \mathrm{Var}(\epsilon_k) + \mathbb{E}[\epsilon_k]^2 = \mathrm{Var}(\epsilon_k) + O(\frac{1}{n^2}).$$
We only need to compute the limit of the variance of $\epsilon_k$.

$$\mathrm{Var}(\epsilon_k) = \frac{1}{n^2} \mathrm{Var}( \alpha_k^* S_k \alpha_k ). $$
We decompose this variance as follows

\begin{equation} \label{decomp}\mathrm{Var} ( \alpha_k^* S_k \alpha_k ) =T_1^n+T_2^n+T^n_3+T^n_4+T^n_5+T^n_6\end{equation}
where if we denote in short  $\alpha_k = (X_i)_i$ and $(S_k)_1 = (s_{ij})_{ij}$ we have

\begin{eqnarray*}
T_1^n&=& \sum_{i,j} \mathrm{Var} (X_i s_{ij} X_j) \\
T_2^n&=&  \sum_{i,p} \mathrm{Cov}(s_{ii}X_i^2, s_{pp}X_p^2) \\ 
T_3^n&=&
 2 \sum_i \sum_{p \neq q}\mathrm{Cov}(X_i^2 s_{ii}, X_p s_{pq} X_q) \\ 
 T_4^n& =& 2 \sum_i \sum_{j \neq p} \mathrm{Cov}(X_i s_{ij} X_j, X_p s_{pi} X_i) \\ 
 T_5^n& =& \sum_{i,j} \mathrm{Cov}(X_i s_{ij} X_j, X_j s_{ji} X_i) \\ 
 T_6^n& =& \sum_{i \neq j \neq p \neq q} \mathrm{Cov} (X_i s_{ij} X_j, X_p s_{pq} X_q)\\
 \end{eqnarray*}
 We shall prove that $n^{-1}T^n_i$ converges for $i=1,2,5$ to a non zero limit whereas for $i=3,4,6$ it goes to zero.
Let us compute an equivalent for each term. On the way, we shall use the symmetry of the law
of $X$, which implies that the law of the matrix $W_{n,m}$ is invariant under $\left(W_{n,m}(ij)\right)\ra \left(W_{n,m}(\sigma(i)\sigma(j))\right)$ for any permutation $\sigma$ keeping fixed  $\{1,\ldots,m\}$ 
but permuting $\{m+1,\ldots,n+m\}$.  Therefore, we deduce  for instance that $\mathbb{E}[s_{ii} s_{pq}] =\mathbb{E}[s_{11}s_{23}] $ for all distinct $i,p,q<m$. 
We shall now estimate these terms by taking advantage of  some linear algebra tricks used already in e.g. \cite[Lemma 3.2]{EYY}.  

Let $\mathbb{T}$ be a set of $[\![1, (n+m) ]\!]$. Let us denote by $W_{n,m}^{(\mathbb{T})}$ the submatrix of $W_{n,m}$ obtained by deleting the $i$-th row and column, for $i \in \mathbb{T}$. To simplify the notations, let us denote by $(ij\mathbb{T})$ the set $(i \cup j \cup \mathbb{T})$.

Let us introduce the following notations : $$Z_{ij}^{(\mathbb{T})} = \frac{1}{n} \sum_{p,q} (\alpha_i)_p (W_{n,m}^{(\mathbb{T})}-z)^{-1}(p,q) (\alpha_j)_q,$$
$$K_{ij}^{(\mathbb{T})} = \frac{1}{\sqrt{n}} (X_j)_i - z \delta_{ij} - Z_{ij}^{(\mathbb{T})}.$$

We have the following formulas  for $i\neq j\neq k$ 
\begin{eqnarray}
s_{ij} &=& (W_{n,m}^{(k)}-z)^{-1}(i,j) =  - s_{jj} s_{ii}^{(j)} K_{ij}^{(ijk)}\label{la1} \\
s_{ii}&=&s_{ii}^j +s_{ij}s_{ji} s_{jj}^{-1}\label{la2}\\
s_{ij}&=&s_{ij}^k +s_{ik}s_{kj} s_{kk}^{-1}\label{la3}\end{eqnarray}

The concentration inequality states that, for $i \neq j$,  $\mathbb{E}[|Z_{ij}^{(\mathbb{T})}|^p] = O(v_0^{-p}n^{-\frac{p}{2}})$. Thus, 
\begin{equation}\label{b3}
\mathbb{E}[|K_{ij}^{(\mathbb{T})}|^p] = O(v_0^{-p}n^{-\frac{p}{2}})\,.\end{equation}
As a consequence, \eqref{la1} implies that for all $p$, for $i\neq j$
\begin{equation}\label{bo}
\mathbb E[|s_{ij}|^p]=O(v_0^{-3p}n^{-\frac{p}{2}})\,.\end{equation}
Moreover,  if $z=E+i\eta$, we find 
$$\Im s_{jj}(z)=\langle e_j, \frac{\eta}{\eta^2+(E-W)^2} e_i\rangle\ge \frac{\eta}{(2M)^2}$$
on $\|W\|\le M$, $\eta\le M$. Hence, we can use Lemma \ref{borne} to find that 
$$\mathbb E[ |s_{jj}^{-1}|^p]=O(\frac{1}{v_0^p})$$
Therefore, from \eqref{la2} and \eqref{bo}, we deduce
\begin{equation}\label{bo2}
\mathbb E[|s_{ii}-s_{ii}^{(j)} |^p]=O(v_0^{-3p}n^{-p}),\qquad \mathbb E[|s_{ij}-s_{ij}^{(k)}|^p]=O(v_0^{-3p}n^{-p})
\,.\end{equation}
We next show that for all $i\neq p\neq q$
\begin{equation}\label{b3}
\mathbb{E}[s_{ii} s_{pq}] =O(\frac{v_0^{-9}}{n \sqrt{n}})
\end{equation}
Let us first bound $$\mathbb E[s_{pq}]=- \mathbb E[ s_{pp}s_{qq}^{(p)} K^{pqk}_{pq}]= - \mathbb E[ s_{pp}^{)q)} s_{qq}^{(p)} K^{(pqk)}_{pq}] +O(\frac{v_0^{-12}}{n \sqrt{n}})$$
by \eqref{bo2} and \eqref{b3}. 
Denoting $e_p := s_{pp}^q - s^1(z)$ and $e_q:= s_{qq}^{(p)} - s^1(z)$, we have \begin{align*} \mathbb{E}[s_{pq}] & = \mathbb{E}[- s_{pp}^q s_{qq}^{(p)} K_{pq}^{(pqk)}] = - \mathbb{E}[(e_p +s^1(z))(e_q+s^1(z))K_{pq}^{(pqk)}] \\ & =  s^1(z) \mathbb{E}[e_p K_{pq}^{(pqk)}] + s^1(z) \mathbb{E}[e_q K_{pq}^{(pqk)}] + \mathbb{E}[e_p e_q K_{pq}^{(pqk)}]). \end{align*}
where we used that  $\mathbb{E} [K_{pq}^{(pqk)}] = 0$, because $p \neq q$. Moreover, recall that when we estimated $e_p$ in the proof of Theorem \ref{THMspeed}, we had
$$e_p=\frac{-s^1(z)+\frac{1}{n}\alpha_p^*s^{(qp)}\alpha_p}{(z+s^1(z))^2} +O(\frac{1}{v_0^3}|-s^1(z)+\frac{1}{n}\alpha_p^*s\alpha_p|^2)=\frac{\frac{1}{n}\alpha_p^*s^{(qp)}\alpha_p-\frac{1}{n}\Tr s^{(qp)} }{(z+s^1(z))^2} +O(v_0^{-19} n^{-1})$$
using the independence of $\alpha_p$ and $\alpha_q$, and their centering,  we see that for $r=p$ or $q$
$$\mathbb{E}[(\frac{1}{n}\alpha_p^*s^{(qp)}\alpha_p-\frac{1}{n}\Tr s^{(qp)}) K_{pq}^{(pqk)}] =0\,.$$
Hence we deduce that
\begin{equation}\label{b4}
\mathbb{E}[ s_{pq}] =O(\frac{v_0^{-20}}{n \sqrt{n}})\,.
\end{equation}
It remains to bound $$\mathbb{E}[ s_{pq}(s_{ii} -s^1(z))]=\mathbb{E}[ s_{pp}s_{qq}^{(p)}  K_{pq}^{(pqk)}(s_{ii} -s^1(z))]\,.$$
 Thanks to \eqref{bo2} and \eqref{bo}, we can replace $s_{pp}$ (resp. $s_{qq}^{(p)}$, resp. $s_{ii}$) by $s_{pp}^{(qi)}, s_{qq}^{(pi)}, s_{ii}^{(qp)}$).
 We can then apply the same argument as before as 
 $$\mathbb{E}[ s_{pp}^{(qi)}s_{qq}^{(pi)}  K_{pq}^{(pqki)}(\frac{1}{n}\alpha_i^*s^{(qpi)}\alpha_i-\frac{1}{n}\Tr s^{(qpi)})]=0$$
 to conclude that
 \begin{equation}\label{b3}
\mathbb{E}[(s_{ii} -s^1(z))s_{pq}] =O(\frac{v_0^{-20}}{n \sqrt{n}})\,.
\end{equation}

\begin{itemize}
\item {\it Estimates of $T^n_1$ and $T^n_5$.}
We first  prove that for  $v_0\ge n^{-1/20}$,
uniformly on $\mathbb C_{v_0}$, we have 
\begin{equation}\label{b1}\frac{1}{n}T^n_1=\frac{1}{n} \sum_{i,j} \mathrm{Var} (X_i s_{ij} X_j )\to (\mu - 2) (s^1(z))^2 + \partial_z s^1(z).\end{equation}
In fact, let us write the following decomposition:
$$\frac{1}{n}T^n_1=\frac{1}{n}\sum_i \mbox{Var}(X_i^2s_{ii}) +\frac{1}{n} \sum_{i\neq j}  \mbox{Var}(X_i X_j s_{ij}) =\frac{1}{n}T^n_{11}+\frac{1}{n}T^n_{12}\,.$$
To estimate the first term, observe that 
 $$| \mathbb{E}[s_{ii}^2 - (s^1(z))^2] |= |\mathbb{E}[(s_{ii} - s^1(z))(s_{ii}+s^1(z))]|\le \frac{2}{v_0}\frac{1}{\sqrt{v_0^16}n}$$
by Cauchy-Schwartz inequality and Theorem \ref{THMspeed}. Hence,
 $\mathbb{E}[s_{ii}^2] \to (s^1(z))^2$ for $v_0\ge n^{-1/19}$. Similarly, we can show that $\mathbb{E}[s_{ii}]^2 \to (s^1(z))^2$. Therefore we deduce that, since $\mu = \mathbb{E}[X_i^4]$,
 uniformly on $v_0\ge n^{-1/19}$, we have 
 \begin{equation}\label{lop}\frac{1}{n}T^n_{11}=\frac{1}{n} \sum (\mathbb E[X_i^4]\mathbb E[s_{ii}^2]-\mathbb E[s_{ii}]^2)\simeq  (\mu - 1) (s^1(z))^2.\end{equation}
Moreover,  
$$T^n_{12}= \sum_{i \neq j} \mathbb{E}[X_i^2 X_j^2] \mathbb{E}[s_{ij}^2] =  \sum_{i \neq j} \mathbb{E}[s_{ij}^2] +  \sum_{i \neq j} \mathbb{E}[s_{ij}^2]\mathbb E[ (X_i^2-1)(X_j^2-1)]=T^n_{121}+T^n_{122}$$
where we can estimate the second  term  $T^n_{122}$ by noticing that $ \mathbb{E}[s_{ij}^2]$ does not depend on $ij$ so that we can factorize it and deduce that
$$T^n_{122}= \sum_{i \neq j} \mathbb{E}[s_{ij}^2]\mathbb E[ (X_i^2-1)(X_j^2-1)]= \mathbb{E}[s_{m+1,m+2}^2]\left( \mathbb E[ (\sum_i(X_i^2-1))^2]-\mathbb E[\sum(X_i^2-1)^2]\right)$$
By \eqref{bo}, $ \mathbb{E}[s_{m+1,m+2}^2]$ is bounded by $v_0^{-6} n^{-1}$ whereas by we can bound the last term by  $Cn$ by 
concentration of measure, Theorem \ref{concentrationthm}, which yields: for all $\ell\in\mathbb N$ and $\delta>0$
\begin{equation}\label{concl}
\mathbb P( |\sum_p X_p|\ge \delta \sqrt{N})\le e^{-c_0\delta^2}\qquad \mathbb E( |\sum_p (X_p^2-1)|^p)\le C_p \sqrt{n}^p \end{equation}
This implies that $T^n_{122}$ is bounded by $C v_0^{-6}$. For the first term in $T^n_{12}$ we have

 \begin{eqnarray} T^n_{121}&=&\sum_{i \neq j} \mathbb{E}[s_{ij}^2] = \mathbb{E}[\sum_{i,j}s_{ij}^2 - \sum_{i} s_{ii}^2]  = \mathbb{E}[\mathrm{Tr} (S_k)_1^2 - \sum_{i} s_{ii}^2 ]  = \mathbb{E}[ \mathrm{Tr} (S_k)'_1 ] - \sum_{i} \mathbb{E}[s_{ii}^2]\nonumber \\ 
 & \sim & n (\partial_z s^1(z) - (s^1(z))^2)+ O(v_0^{-18}).\label{hjk}\ \end{eqnarray}
Therefore, we deduce \eqref{b1} from \eqref{lop} and \eqref{hjk}. 
To prove the convergence of $T^n_5$, notice that
 $$\sum_{i,j} \mathrm{Cov}(X_i s_{ij} X_j, X_j s_{ji} X_i) = \sum_{i,j} \mathrm{Var} (X_i s_{ij} X_j),$$ so that by the previous computation
$$ \frac{1}{n}T^n_5=\frac{1}{n} \sum_{i,j} \mathrm{Cov}(X_i s_{ij} X_j, X_j s_{ji} X_i) \to (\mu - 2) (s^1(z))^2 + \partial_z s^1(z).$$

\item  {\it Estimate of $T^n_2 $ .}

For the second  term, we have  by using that $\mathbb{E}[s_{ii} s_{pp}] $ is independent of $i\neq p$.
\begin{align*} T^n_2&=\sum_{i,p} \mathrm{Cov}(s_{ii}X_i^2, s_{pp}X_p^2) \\
& = \sum_{i,p}( \mathbb{E}[s_{ii}s_{pp}]\mathbb{E}[X_i^2 X_p^2] - \mathbb{E}[s_{ii}]\mathbb{E}[s_{pp}] )\\ & = \sum_{i,p} (\mathbb{E}[s_{ii}s_{pp}] - \mathbb{E}[s_{ii}]\mathbb{E}[s_{pp}]) + \sum_{i,p} (\mathbb{E}[X_i^2 X_p^2]-1) \mathbb{E}[s_{ii} s_{pp}] \\ & = \mathrm{Var}(\mathrm{Tr}(S_1)) + \mathrm{Var}(\sum_{i=1}^n X_i^2) (s^1(z))^2 + o(n). \end{align*}

Let $f$ be the function $f : x \mapsto (x-z)^{-1}$. $f$ is a Lipschitz function, whose Lipschitz constant is bounded by $v_0^{-2}$. By Corollary \ref{concspectral} (see the appendix \ref{concentrationsec}), we can neglect the variance of $\mathrm{Tr}(S_1)$. Since $\lim \frac{1}{n} \mathrm{Var}(\sum_{i=1}^n X_i^2) = \kappa$, we now have $$ \frac{1}{n} \sum_{i,p} \mathrm{Cov}(s_{ii}X_i^2, s_{pp}X_p^2) \to \kappa (s^1(z))^2.$$

\item  {\it Estimate of $T^n_3$}
Now, we will prove that the other terms can all be neglected and first  estimate $T^n_3$. We  start by the following expansion:

\begin{align*} \sum_i \sum_{p \neq q}\mathrm{Cov}(X_i^2 s_{ii}, X_p s_{pq} X_q) & = \sum_i \sum_{p \neq q}\mathbb{E}[X_i^2 X_p X_q] \mathbb{E}[s_{ii} s_{pq}] \\ & = \sum_i \sum_{p \neq q}\mathbb{E}[(X_i^2-1) X_p X_q] \mathbb{E}[s_{ii} s_{pq}] \end{align*}

We therefore have, by symmetry 
 of $\mathbb E[s_{ii}s_{pq}]$, that
  \begin{eqnarray}
T^n_3&=&
 \sum_i \sum_{p \neq q}\mathbb{E}[(X_i^2-1) X_p X_q] \mathbb{E}[s_{ii} s_{pq}] \nonumber\\
 &=& \mathbb{E}[s_{11}s_{23}] \mathbb E[\left(\sum_i (X_i^2-1)\right)\left(\sum_{p\neq q} X_pX_q\right)]\label{poi}\\
 &&+ \sum_{(pq)=(12),(21)}(\mathbb{E}[s_{11}s_{pq}] -\mathbb E[s_{11}s_{23}])(\mathbb E[\sum_i (X_i^2-1)X_i \sum_p X_p]-\mathbb E[\sum_i (X_i^2-1)X_i^2])\nonumber\end{eqnarray}
The last expectations are  at most of order $n\sqrt{n}$ by \eqref{concl} whereas \eqref{b3} shows that the expectations over the $s_{ij}$ are at most of order $v_0^{-12}/n\sqrt{n}$. 
Therefore,  $T_3^n$ is bounded by $O(v_0^{-14})$. 

\item {\it Estimate of $T^n_4$ and $T^n_6$}

For $i \neq j \neq p \neq q$, since the indices are bigger than $m$ (it is the matrix $(S_k)_1$ which is concerned),  
we have as for the estimation of $T^n_3$ 
 \begin{align*} \mathbb{E}[s_{ij} s_{pq}] & = O(\frac{1}{v_0^{14} n\sqrt{n}}). \end{align*}
Moreover, this term does not depend on the choices of $i\neq j \neq p \neq q$ and 
we also have by concentration of measure  $$\sum_{i \neq j \neq p \neq q} \mathbb{E}[X_i X_j X_p X_q] = O({n^2}).$$

Therefore,  $$\frac{T_6^n}{n}= \frac{1}{n} \sum_{i \neq j \neq p \neq q} \mathrm{Cov} (X_i s_{ij} X_j, X_p s_{pq} X_q) = O(\frac{1}{v_0^{14}\sqrt{n}}).$$

Finally, using the same reasoning, we show that $$\frac{T_4^n}{n}=  \frac{1}{n} \sum_i \sum_{j \neq p} \mathrm{Cov}(X_i s_{ij} X_j, X_p s_{pi} X_i)=O(\frac{1}{v_0^{18}\sqrt{n}}).$$

Indeed, if $ i \neq j \neq p$, we know that $\mathbb{E}[s_{ij}s_{pi}]= O(\frac{v_0^{-18}}{n \sqrt {n}})$. And, if $i =j$ or $i=p$, we have $\mathbb{E}[s_{ii}s_{pi}] = O(\frac{v_0^{-9}}{n})$. In both cases, $\mathbb{E}[s_{ij}s_{pi}]= O(\frac{v_0^{-18}}{n})$, which ends the proof.

To conclude, for $k \le m$, we have proved that uniformly on $v_0\ge n^{-1/36}$, $$ \lim_{(n,m) \to \infty, m/n \to c} m \mathbb{E}[\epsilon_k^2] = 2 c ( \mu -2 + \frac{\kappa}{2})(s^1(z))^2 + 2c \partial_z s^1(z).$$
This aim plies that 
 $$\mathbb{E}[\delta_2(z)] = - \frac{c}{m}(s^2(z))^2  \frac{\partial_z s^2(z)}{s^2(z)}  + \frac{c}{m} (s^2(z))^3 \left[ 2(\mu - 2 + \frac{\kappa}{2})(s^1(z))^2 + 2\partial_z s^1(z)\right].$$
For $k > m$, using the same method, we have uniformly on $v_0\ge n^{-1/36}$,
$$ n \mathbb{E}[\epsilon_k] \to \frac{\partial_z s^1(z)}{s^1(z)}.$$
This time, for $\mathbb{E}[\epsilon_k]$, $k > m$, the computations are almost the same as previously, except for the fact that the vectors are now independent. Therefore, we have $\mathrm{Var}(\sum_i X_i^2) = m( \mu - 1)$.
Thus, for $k > m$, we have the uniform convergence for $v_0\ge n^{-1/36}$;
$$ \lim_{(n,m) \to \infty, m/n \to c} n \mathbb{E}[\epsilon_k^2] =  c\left( (4\mu -5)(s^2(z))^2 + 2\partial_z s^2(z)\right)$$.

Summing these estimates, we deduce  that uniformly on $v_0\ge n^{-1/36}$,
$$\mathbb{E}[\delta_1(z)] \sim - \frac{1}{n} (s^1(z))^2 \frac{\partial_z s^1(z)}{s^1(z)} + \frac{c}{n} (s^1(z))^3 \left[ (4\mu -5)(s^2(z))^2 + 2\partial_z s^2(z)\right].$$

Noticing that $m = nc + O(1)$, we have  $$ n(\mathbb{E}[s_{n,m}^1(z)] - s^1(z)) +  m(\mathbb{E}[s_{n,m}^2(z)] - s^2(z)) = n[c_{n,m} - c] A(z) + n\mathbb{E}[\delta_1(z)] B(z) + m\mathbb{E}[\delta_2(z)] C(z) + O(n^{-1})$$ where \begin{align*} & A(z) = - \frac{(z^2+c-1)}{2zc} + \frac{1}{2\sqrt{\frac{(z^2-c+1)^2}{z^2} -4}} \left[ \frac{z^2-c+1}{z^2} - \frac{z^2+c-1}{cz^2}(z^2-1) + 2 \right]\\ & B(z) = \frac{1}{2} + \frac{c-1}{2z^2} + \frac{1}{2\sqrt{\frac{(z^2-c+1)^2}{z^2} -4}} \left[ z - \frac{(c-1)^2}{z^3} \right] \\& C(z) = \frac{1}{2} - \frac{c-1}{2z^2} + \frac{1}{2\sqrt{\frac{(z^2-c+1)^2}{z^2} -4}} \left[ z - \frac{(c-1)^2}{z^3} \right] \end{align*}

But, using the expressions of $s^1(z)$ and $s^2(z)$, we can see that $$ A(z) = s^2(z), \:  B(z) = 1 + c \partial_z s^2(z), \: C(z) = 1 + \partial_z s^1(z).$$

Therefore, if we define $M(z)$ by \begin{align*} M(z) := & \sigma s^2(z) + (1 + c \partial_z s^2(z)) (s^1(z))^3 \left[  - \frac{\partial_z s^1(z)}{ (s^1(z))^2} + c \left[ (4\mu -5)(s^2(z))^2 + 2\partial_z s^2(z)\right] \right] \\ & +  (1 + \partial_z s^1(z)) c (s^2(z))^3 \left[ - \frac{\partial_z s^2(z)}{(s^2(z))^2} +  \left[ 2(\mu - 2 + \frac{\kappa}{2})(s^1(z))^2 + 2 \partial_z s^1(z)\right] \right] \end{align*} we have proved
that uniformly on $v_0\ge n^{-1/36}$,
$$M_{n,m}(z) \to  M(z).$$ 

\end{itemize}
\end{proof}

\section{Appendix}

\subsection{Concentration of measure}\label{concentrationsec}

\begin{thm}\label{concentrationthm}
Let $X$ be a random vector whose distribution satisfies Hypothesis \ref{hypo1} and \eqref{hypo2}. For all symmetric matrix $A$, for all integer $p$, there exist $C_p < \infty$ such that $$\mathbb{E}[{|X^* A X - \mathrm{Tr}(A)|}^p] \le C_p 	{\| A \|}^p {\sqrt{n}}^p  \: \: \qquad \| A \| = \sup_{\| y \| = 1} \| Ay \|, \: \: \: \|.\| \: \text{ euclidean norm}.$$
\end{thm}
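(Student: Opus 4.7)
My plan relies on the fact that Hypothesis \ref{hypo1} yields a log-Sobolev inequality for the law of $X$. Indeed, $\mathrm{Hess}(V)\ge C^{-1}I_n$ is precisely the hypothesis of the Bakry--Emery criterion, so the density $Z_n^{-1}e^{-V}$ satisfies LSI with a constant proportional to $C$. By Herbst's argument, this implies Gaussian concentration for Lipschitz functions: for every $L$-Lipschitz $g$,
$$\mathbb{P}(|g(X)-\mathbb{E} g(X)|\ge t)\le 2\exp(-t^2/(4CL^2)),$$
and in particular $\|g-\mathbb{E} g\|_{L^p}\le C'\sqrt{p}\,L$ for every integer $p\ge 2$. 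This is the only consequence of log-concavity I will use.

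Set $Y:=X^*AX-\mathrm{Tr}(A)$. Since the covariance of $X$ is $I_n$, one has $\mathbb{E}[X^*AX]=\mathrm{Tr}(A)$, so $Y$ is already centered. The map $f(X)=X^*AX$ has gradient $2AX$ with $\|\nabla f(X)\|\le 2\|A\|\,\|X\|$, which is not globally bounded, so Herbst's bound cannot be applied to $f$ directly. The plan is a two-scale truncation. First, $X\mapsto\|X\|$ is $1$-Lipschitz and $\mathbb{E}\|X\|^2=n$, so the concentration bound of the previous paragraph gives $\|X\|\le K\sqrt{n}$ up to a subgaussian error of scale $\sqrt{C}$. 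On the event $\{\|X\|\le K\sqrt{n}\}$, $f$ is $2K\|A\|\sqrt{n}$-Lipschitz; replacing $f$ by a smooth truncation $\tilde f$ which agrees with it on this ball and has the same Lipschitz constant, then applying Herbst to $\tilde f$ and combining with the Gaussian tail of $\|X\|$, yields a Hanson--Wright type bound
$$\mathbb{P}(|Y|\ge t)\le 2\exp\!\left(-c\min\!\left(\frac{t^2}{n\|A\|^2},\,\frac{t}{\|A\|}\right)\right).$$
Integrating $\mathbb{E}|Y|^p=p\int_0^\infty t^{p-1}\mathbb{P}(|Y|\ge t)\,dt$ then produces $\mathbb{E}|Y|^p\le C_p\|A\|^p n^{p/2}$, the claimed inequality.

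The main technical difficulty lies in making the two-scale truncation rigorous, because the gradient of a quadratic form is unbounded and the two tail regimes have to be glued together carefully. A clean way to sidestep these bookkeeping issues is to quote (or adapt) the Hanson--Wright inequality for log-concave distributions of Aida--Stroock, Bobkov--G\"otze, or Adamczak, which applies to any LSI probability measure and produces the displayed tail bound directly. A much simpler route, yielding only the case $p=2$, is the Brascamp--Lieb inequality applied to $f=X^*AX$: one gets $\mathrm{Var}(X^*AX)\le 4C\,\mathbb{E}\|AX\|^2=4C\,\mathrm{Tr}(A^*A)\le 4Cn\|A\|^2$. Higher even moments $p=2k$ can alternatively be treated inductively by iterating Brascamp--Lieb on the powers $Y^k$, using $|\nabla Y^k|^2=4k^2 Y^{2k-2}\|AX\|^2$ and Cauchy--Schwarz to get a recursion bounding $\mathbb{E}[Y^{2k}]$ in terms of $\mathbb{E}[Y^{2k-2}]$, $n$ and $\|A\|^2$; odd moments then follow from H\"older's inequality.
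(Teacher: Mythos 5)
Your argument is correct in outline but follows a genuinely different route from the paper. The paper first splits $X^*AX-\mathrm{Tr}(A)$ into its diagonal part $\sum_i a_{ii}(X_i^2-1)$ and its off-diagonal part $\sum_{i\neq j}a_{ij}X_iX_j$. The diagonal part is handled, as you do, through the log-Sobolev/Herbst concentration coming from $\mathrm{Hess}(V)\ge C^{-1}I_n$, applied to the Lipschitz map $x\mapsto\sqrt{\sum a_{ii}x_i^2}$ after writing $A=A^+-A^-$. The off-diagonal part, however, is treated by a purely combinatorial expansion of the $p$-th moment: the joint moments $\mathbb{E}[X_{i_1}\cdots X_{j_p}]$ are controlled by exploiting the \emph{exchangeability} of the law of $X$ (the symmetry assumption in Hypothesis \ref{hypo1}) to replace isolated indices by averages $N_K^{-1}\sum_{s\in I_k}X_s$, whose concentration yields the factor $\sqrt{n}^{-K}$. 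Your approach treats the quadratic form as a whole, either via a Hanson--Wright-type tail bound for LSI measures (the Adamczak convex-concentration version does apply here, since LSI implies the required concentration property, and integrating the tail indeed gives $C_p\|A\|^p n^{p/2}$), or via the Brascamp--Lieb/Poincar\'e route. The latter is attractive because it is elementary and self-contained; note only that the step from $\mathrm{Var}(Y^k)\le 4Ck^2\,\mathbb{E}[Y^{2k-2}\|AX\|^2]$ to a usable recursion should go through H\"older with exponents $k/(k-1)$ and $k$, producing a self-bounding inequality $u\le a+bu^{(k-1)/k}$ for $u=\mathbb{E}[Y^{2k}]$ (the Aida--Stroock argument), rather than a literal Cauchy--Schwarz reduction to $\mathbb{E}[Y^{2k-2}]$, which would bring in the higher moment $\mathbb{E}[Y^{4k-4}]$. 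The trade-off between the two proofs: yours nowhere uses the symmetry of the law of $X$ and is therefore more general (and shorter if one accepts the external Hanson--Wright reference), while the paper's is self-contained but leans on exchangeability in an essential way for the off-diagonal sum. Your two-scale truncation paragraph is only a sketch — the gluing of the two tail regimes is exactly the nontrivial content of the cited results — so for a complete write-up you should either carry out the Poincar\'e recursion in full or quote the dependent Hanson--Wright inequality precisely.
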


\begin{proof}  
 We decompose the covariance as :

\begin{equation}\label{cov}\mathbb{E}[{|X^* A X - \mathrm{Tr}(A)|}^p] \le 2^{p-1} (\mathbb{E}[|\sum_{i=1}^N (X_i^2 - \mathbb{E}[X_i^2])a_{ii}|^p] + \mathbb{E}[{|\sum_{i \neq j} X_i X_j a_{ij}|}^p]).\end{equation}

We can now find an upper bound for each term. 
For this, we will use a remarkable property of $V$. Since Hess $V$ $ \ge \frac{1}{C} \: I_n$ , the logarithmic Sobolev inequality holds for the measure $dX$ with the constant C (see e.g. \cite[ Lemma 2.3.2 and 2.3.3]{AGZ}). Consequently, for all Lipschitz function $f : \mathbb{R}^n \rightarrow \mathbb{R}$, if we write $|f|_{\mathcal{L}}$ its Lipschitz constant, we have $$\mathbb{P}(|f(X)- \mathbb{E}[f(X)]| \ge t) \le 2 e^{\frac{-t^2}{2c |f|^2_{\mathcal{L}}}}.$$
Let us focus on the first term on the right hand side of \eqref{cov}.

First, let us suppose that for all $i$, $a_{ii} \ge 0$. Let $f : x \mapsto \sqrt{\sum a_{ii} x_i^2}$, we will show that this is a Lipschitz function whose constant is bounded by $\|A\|_{\infty}^{\frac{1}{2}}$. Indeed, $\frac{\partial f}{ \partial x_i} = a_{ii} \frac{x_i}{f(x)}$. Therefore, since $a_{ii} \ge 0$, we have $\| \nabla f \|^2 \le \max |a_{ii}| \le \|A\|_{\infty} $. Taylor's inequality states that $|f|_{\mathcal{L}} \le \|A\|_{\infty}^{\frac{1}{2}} $. For all $p \in \mathbb{N}$, we now have : \\ \begin{align*} \mathbb{E}[{|\sqrt{\sum a_{ii} X_i^2} - \mathbb{E}[\sqrt{\sum a_{ii} X_i^2}]|}^p] &= \int_{0}^{\infty} t^{p-1} \mathbb{P}(|\sqrt{\sum a_{ii} X_i^2} - \mathbb{E}[\sqrt{\sum a_{ii} X_i^2}]| \ge t) dt \\ \ & \le 2 \int_{0}^{\infty} t^{p-1} e^{\frac{-t^2}{2 \|A\|_{\infty} C}} dt   \le C_p \|A\|_{\infty}^{\frac{p}{2}}. \end{align*}

Thus, we have : 
$$
C_1\le 2^{p-1}  (\mathbb{E}[|{\sum a_{ii} X_i^2} - \mathbb{E}[\sqrt{\sum a_{ii} X_i^2}]^2|^p] +  \mathbb{E}[|\mathbb{E}[\sum a_{ii} X_i^2] - \mathbb{E}[\sqrt{\sum a_{ii} X_i^2}]^2|^p]),$$
%$$\mathbb{E}[{|\mathbb{E}[\sum a_{ii} X_i^2] - \mathbb{E}[\sqrt{\sum a_{ii} X_i^2}]^2|}^p] = \mathbb{E}[|\sqrt{\sum a_{ii} X_i^2} - \mathbb{E}[\sqrt{\sum a_{ii} X_i^2}]|^2]^p$$.
By the concentration inequality, we know that the second  term is bounded regardless of $n$. Then, for the first term, Cauchy-Schwartz inequality gives 
\begin{eqnarray*}
\mathbb{E}[{|{\sum a_{ii}X_i^2} - \mathbb{E}[\sqrt{\sum a_{ii} X_i^2}]^2|}^p] &=& \mathbb{E}[|\sqrt{\sum a_{ii} X_i^2} - \mathbb{E}[\sqrt{\sum a_{ii} X_i^2}]|^p |\sqrt{\sum a_{ii} X_i^2} + \mathbb{E}[\sqrt{\sum a_{ii} X_i^2}]|^p],\\
&\le& \mathbb{E}[|\sqrt{\sum a_{ii} X_i^2} - \mathbb{E}[\sqrt{\sum a_{ii} X_i^2}]|^{2p}]^{\frac{1}{2}} \mathbb{E}[|\sqrt{\sum a_{ii} X_i^2} + \mathbb{E}[\sqrt{\sum a_{ii} X_i^2}]|^{2p}]^{\frac{1}{2}}.\end{eqnarray*}
Using again the concentration inequality, we show that the first term of the product can be bounded by the product of $\|A\|_{\infty}^p$ and a constant depending only on $p$. Assembling all these terms alongside with noticing that $\|A\|_{\infty} \le \|A\|$, we have proved that there exist $C_p$ independent of $n$ such that : $$\mathbb{E}[|\sum_{i=1}^n ( X_i^2 - \mathbb{E}[X_i^2])a_{ii}|^p] \le C_p \|A\|^p \sqrt{n}^p.$$
Now, for the general case, let us write $A$ as a sum of a positive and a negative matrix $A = A^+ - A^-$. Therefore,  \begin{align*} \mathbb{E}[|\sum_{i=1}^N (X_i^2 - \mathbb{E}[X_i^2])a_{ii}|^p] & = \mathbb{E}[|\sum_{i=1}^N (X_i^2 - \mathbb{E}[X_i^2])a_{ii}^+ - a_{ii}^-|^p] \\ & \le 2^{p-1} (\mathbb{E}[|\sum_{i=1}^N (X_i^2 - \mathbb{E}[X_i^2])a_{ii}^+|^p] +  \mathbb{E}[|\sum_{i=1}^N (X_i^2 - \mathbb{E}[X_i^2])a_{ii}^-|^p]). \end{align*}
Since the $a_{ii}^+$ and the $a_{ii}^-$ are all non negative, the previous proof still holds and therefore, we have $$\mathbb{E}[|\sum_{i=1}^n ( X_i^2 - \mathbb{E}[X_i^2])a_{ii}|^p] \le C_p \|A\|^p \sqrt{n}^p.$$
On the other hand,  if we denote by $Z_n$ the cardinal number of partitions of $[\![1,n ]\!]$ into two distincts sets $I$ and $J$, we have $$\mathbb{E}[{|\sum_{i \neq j} X_i X_j a_{ij}|}^p] \le \frac{1}{Z_n} \sum_{I \cap J = \emptyset} \mathbb{E}[{|\sum_{(i,j) \in (I \times J)} X_i X_j a_{ij}|}^p].$$
We next expand the right hand side as
$$\mathbb{E}[{|\sum_{(i,j) \in (I \times J)} X_i X_j a_{ij}|}^p]=\sum_{i_1,\ldots,i_p\in I}\sum_{j_1,\ldots,j_p\in I}\prod_{k=1}^p a_{i_k,j_k} \mathbb E[X_{i_1}\cdots X_{i_p} X_{j_1}\cdots X_{j_p}]$$
In the above right hand side, assume that there are $K$ indices $s_1,\ldots,s_K$ with multiplicity one. Let $\ell_1,\ldots, \ell_r$ be the others.
 We then can apply the symmetry of the law of the $X_i$'s to see that for some $n_1,\ldots, n_r\ge 2$, 
$$\mathbb E[X_{i_1}\cdots X_{i_p} X_{j_1}\cdots X_{j_p}]=\mathbb E[X_{\ell_1}^{n_1}\ldots X_{\ell_r}^{n_r}
\prod_{k=1}^K  X_{s_k}]=\mathbb E[X_{1}^{n_1}\ldots X_{\ell_r}^{n_r}
\prod_{k=1}^K  \frac{1}{N_K}\sum_{s\in I_k} X_{s_k}]$$
where $I_k=[ r+1+(k-1)N_K,  r+kN_K]$ and $KN_K +\sum n_i =n$. 
Now, by concentration inequality applied to $\sum_{s\in I_k} X_{s_k}$ we deduce that there exists a finite constant $C_p$ such that
$$|\mathbb E[X_{i_1}\cdots X_{i_p} X_{j_1}\cdots X_{j_p}]|\le  \sqrt{n}^{-K}\,.$$
Using that $a_{ij}$ is bounded by $\|A\|$, we conclude that
$$\mathbb{E}[{|\sum_{(i,j) \in (I \times J)} X_i X_j a_{ij}|}^p]\le C_p\sum_{K=0}^p \|A\|^p n^{K/2} n^{\frac{p-K}{2}}\le  C_p\|A\|^p n^p$$
which completes the proof.

\end{proof}

\begin{remark}
Let $ 1 \le k \le n$. The logarithmic Sobolev inequality holds for each $X_i(k), i \in [\![1,m]\!]$ with the constant $C$. By independence of the $X_i(k), i \in [\![1,m]\!]$, it also holds for the vector $(X_1(k), \cdots, X_m(k))$ with the same constant. Therefore, the concentration inequality holds the same way for the vector $(X_1(k), \cdots, X_m(k))$ for all $k, 1 \le k \le n$.
\end{remark}
Moreover, the spectral measure satisfies concentration inequalities \cite[Theorem 2.3.5]{AGZ}:

\begin{corollaire}\label{concspectral} Under Hypothesis \ref{hypo1}, for all Lipschitz function $f$ on $\mathbb{R}$, for all $p \in \mathbb{N}$, for all $(n,m) \in \mathbb{N}^2$, there exist $C_p$ independent of $n$ and $m$ such that $$\mathbb{E}[|\mathrm{Tr}(f(W_{n,m}))-\mathbb{E}[\mathrm{Tr}(f(W_{n,m}))|^p] \le C_p |f|_{\mathcal{L}}^p$$.
 
\end{corollaire}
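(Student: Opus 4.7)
The plan is to invoke the standard Herbst argument: derive a logarithmic Sobolev inequality for the joint law of the columns of $Y_{n,m}$, show that $X \mapsto \Tr f(W_{n,m})$ is Lipschitz with a constant proportional to $|f|_\mathcal{L}$ (with proportionality constant uniform in $n,m$), and then integrate the resulting Gaussian tail. This is exactly the route of \cite[Theorem 2.3.5]{AGZ} cited in the paper.

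First, I would upgrade Hypothesis \ref{hypo1} to a LSI for the full entry vector. The assumption $\mathrm{Hess}(V)\ge C^{-1} I_n$ combined with the Bakry--\'Emery criterion (Lemma 2.3.2--2.3.3 of \cite{AGZ}) gives a LSI with constant $C$ for the law of a single column $X_p$. Since the $m$ columns are independent, the product measure on $\mathbb R^{nm}$ carrying $(X_1,\dots,X_m)$ satisfies LSI with the same constant $C$, by tensorization.

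Next I would control the Lipschitz constant of the map $\Phi:(X_1,\dots,X_m)\in\mathbb R^{nm}\mapsto \Tr f(W_{n,m})\in\mathbb R$. The Hoffman--Wielandt inequality gives, for any two Hermitian matrices $A,B$ of size $N=n+m$,
\[
\bigl|\Tr f(A)-\Tr f(B)\bigr|\le |f|_\mathcal{L}\sum_i |\lambda_i(A)-\lambda_i(B)|\le |f|_\mathcal{L}\sqrt{N}\,\|A-B\|_{HS},
\]
so it suffices to bound the Hilbert--Schmidt Lipschitz constant of $W_{n,m}$ seen as a function of the entries. Because $\|W_{n,m}\|_{HS}^2 = 2\|Y_{n,m}\|_{HS}^2 = \tfrac{2}{n}\sum_{p=1}^m \|X_p\|^2$, perturbing $X$ by $\delta X\in\mathbb R^{nm}$ changes $W_{n,m}$ by at most $\sqrt{2/n}\,\|\delta X\|$ in HS norm. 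Combining the two bounds yields
\[
|\Phi|_\mathcal{L}\le |f|_\mathcal{L}\sqrt{N}\cdot\sqrt{2/n} = |f|_\mathcal{L}\sqrt{2(n+m)/n},
\]
and under Hypothesis \ref{hypo3} the factor $\sqrt{2(n+m)/n}$ is bounded uniformly by a constant $\kappa_c$ depending only on $c$.

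With the LSI from step one and the Lipschitz bound from step two, Herbst's argument (see e.g.\ \cite[Chap.\ 4]{AGZ}) gives the sub-Gaussian concentration
\[
\mathbb P\bigl(|\Phi-\mathbb E[\Phi]|\ge t\bigr)\le 2\exp\!\left(-\frac{t^2}{2C\kappa_c^2 |f|_\mathcal{L}^2}\right).
\]
Integrating $p t^{p-1}$ times the tail over $t\ge 0$ then gives $\mathbb E\bigl[|\Phi-\mathbb E \Phi|^p\bigr]\le C_p|f|_\mathcal{L}^p$ with $C_p=C_p(C,\kappa_c,p)$ independent of $n,m$, which is exactly the claim. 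The only non-trivial step is the matrix Hoffman--Wielandt-type bound on $\Tr f(A)-\Tr f(B)$; this is classical. Everything else is a routine assembly of tensorization, Bakry--\'Emery and Herbst, all of which are available from the hypotheses already in place.
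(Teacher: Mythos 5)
Your argument is correct and is exactly the route the paper takes: Corollary \ref{concspectral} is stated there as an instance of \cite[Theorem 2.3.5]{AGZ}, whose proof is precisely your combination of Bakry--\'Emery for each column, tensorization over the independent columns, the Hoffman--Wielandt bound $|\Tr f(A)-\Tr f(B)|\le |f|_{\mathcal L}\sqrt{n+m}\,\|A-B\|_{HS}$, and Herbst. Your observation that the Lipschitz constant $\sqrt{2(n+m)/n}$ is only uniformly bounded thanks to Hypothesis \ref{hypo3} is a correct (and slightly more careful) reading of the constant than the statement's ``for all $(n,m)$'' suggests.
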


The final result we will need is the concentration of the extremal eigenvalues, see \cite[Corollary A.6]{AGZ}
\begin{thm}\label{conclargest} 
Let $\lambda_i$ be the ordered eigenvalues of $W_{n,m}$.
Then there exists a positive constant $c_0$ and a finite constant $C$ such that for all $N$
$$\mathbb{P}(|\lambda_i-\mathbb E[\lambda_i]|\ge \delta)\le C \exp\{-c_0 N\delta^2 \}$$
\end{thm}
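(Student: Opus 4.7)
The plan is to deduce the concentration of each ordered eigenvalue of $W_{n,m}$ from the log-Sobolev inequality already invoked in the proof of Theorem \ref{concentrationthm}, combined with a standard Lipschitz-in-entries estimate for eigenvalues. Write $\bar X := (X_1,\dotsc,X_m)\in\mathbb{R}^{nm}$ for the concatenated vector, so that $W_{n,m}$ is a deterministic function of $\bar X$.

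First, by Hypothesis \ref{hypo1} we have $\mathrm{Hess}(V)\ge C^{-1}I_n$, and the Bakry--\'Emery criterion (cf.\ \cite[Lemma 2.3.2 and Lemma 2.3.3]{AGZ}) yields a logarithmic Sobolev inequality with constant $C$ for the law of a single $X_p$. Since the $X_p$'s are independent, tensorisation of log-Sobolev gives the same inequality, with the same constant $C$, for the joint law of $\bar X$ on $\mathbb{R}^{nm}$.

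Next, I would check that each eigenvalue $\lambda_i(W_{n,m})$ is a Lipschitz function of $\bar X$ with constant at most $\sqrt{2/n}$. By Hoffman--Wielandt, eigenvalues of symmetric matrices are $1$-Lipschitz in the Hilbert--Schmidt norm, and the block structure of $W_{n,m}$ together with the $n^{-1/2}$ normalisation of $Y_{n,m}$ gives the direct computation
$$\|W_{n,m}(\bar X)-W_{n,m}(\bar X')\|_{HS}^2 \;=\; \frac{2}{n}\sum_{p=1}^m \|X_p-X'_p\|^2 \;=\; \frac{2}{n}\,\|\bar X-\bar X'\|^2.$$
Finally, the Herbst argument (the Gaussian-type deviation consequence of log-Sobolev, see \cite[Theorem 2.3.5 and Corollary 4.4.27]{AGZ}) applied to the Lipschitz function $\lambda_i$ yields
$$\mathbb{P}\bigl(|\lambda_i-\mathbb{E}[\lambda_i]|\ge \delta\bigr) \;\le\; 2\exp\!\bigl(-\delta^2/(2C\cdot 2/n)\bigr) \;=\; 2\exp\!\bigl(-n\delta^2/(4C)\bigr),$$
which is the claim with $c_0 := 1/(4C)$.

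No step presents a real obstacle: the tensorisation of log-Sobolev, Hoffman--Wielandt, and Herbst are all standard, and the entire heavy apparatus needed (log-Sobolev from strict log-concavity of $V$) has already been set up for Theorem \ref{concentrationthm}. The only point requiring care is simply bookkeeping the factor $n^{-1/2}$ from the normalisation of $Y_{n,m}$, which is precisely what produces the decay rate $e^{-c_0 n\delta^2}$ in the conclusion. Since \cite[Corollary A.6]{AGZ} packages exactly this chain of reasoning in a general statement, one may also simply cite it and observe that our setting falls within its hypotheses.
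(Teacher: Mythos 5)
Your proof is correct and is exactly the standard chain of reasoning (Bakry--\'Emery log-Sobolev for each column, tensorisation over the independent columns, Hoffman--Wielandt/Lidskii Lipschitz bound with the $n^{-1/2}$ normalisation giving constant $\sqrt{2/n}$, then Herbst) that the paper compresses into the single citation of \cite[Corollary A.6]{AGZ}; the paper gives no independent proof. Your explicit constant $c_0=1/(4C)$ and rate $e^{-n\delta^2/(4C)}$ match the stated bound (with $N=n$), so nothing is missing.
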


\subsection{Boundedness of the spectrum of $W_{n,m}$}

We will now show that, asymptotically, the spectrum of $W_{n,m}$ is bounded. For this, we will compare the law of the matrix with the one of a Wigner matrix whose entries are all i.i.d centered Gaussian random variables. 

Indeed, 
\begin{lemma}\label{borne}
 Under Hypothesis \ref{hypo1},
 there exist $\alpha > 0$ and $C < \infty$ such that, for all integer $n$, we have $$\mathbb{P}(\lambda_{\max}(W_{n,m}) > C) \: \le \: e^{- \alpha C n}.$$
\end{lemma}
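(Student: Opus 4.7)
The plan is to combine the logarithmic Sobolev inequality coming from the strict log-concavity of $V$ with a standard $\varepsilon$-net discretization of the operator norm. By the Bakry--\'Emery criterion, the bound $\mathrm{Hess}(V)\ge I_n/C$ in Hypothesis~\ref{hypo1} implies that the law of each column $X_p$ satisfies a log-Sobolev inequality with constant $C$; by tensorization, the product law of $(X_1,\ldots,X_m)$ on $\mathbb R^{nm}$ inherits the \emph{same} constant, since its joint potential $\sum_{p}V(X_p)$ has Hessian bounded below by $I_{nm}/C$. Herbst's lemma then yields Gaussian concentration for every Lipschitz functional of $(X_1,\ldots,X_m)$, with rate governed by its Lipschitz constant and by $C$.

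Since $W_{n,m}$ is the bordered block matrix with off-diagonal block $Y_{n,m}$, one has $\lambda_{\max}(W_{n,m})=\|Y_{n,m}\|_{\mathrm{op}}=\sup_{u\in S^{n-1},\,v\in S^{m-1}}\frac{1}{\sqrt n}\sum_{i,p}u_iv_pX_{p,i}$. For fixed unit vectors $u,v$, the bilinear form $\mathbf X\mapsto \frac{1}{\sqrt n}\sum_{i,p}u_iv_pX_{p,i}$ is a centered linear functional on $\mathbb R^{nm}$ with Euclidean Lipschitz constant $1/\sqrt n$, so Herbst gives
\begin{equation*}
\mathbb P\!\left(\Bigl|\tfrac{1}{\sqrt n}\textstyle\sum_{i,p}u_iv_pX_{p,i}\Bigr|\ge t\right)\le 2\exp\!\left(-\frac{nt^2}{2C}\right).
\end{equation*}
Choose $\tfrac14$-nets $\mathcal N_n\subset S^{n-1}$ and $\mathcal N_m\subset S^{m-1}$ of cardinalities at most $e^{Kn}$ and $e^{Km}$ for some absolute constant $K$, and use the standard estimate $\|A\|_{\mathrm{op}}\le 2\sup_{\mathcal N_n\times\mathcal N_m}|u^\ast Av|$; a union bound then produces
\begin{equation*}
\mathbb P\bigl(\lambda_{\max}(W_{n,m})\ge 2t\bigr)\le 2\exp\!\left(K(n+m)-\frac{nt^2}{2C}\right).
\end{equation*}

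Since $n+m\le(1+c+o(1))n$ by Hypothesis~\ref{hypo3}, the right-hand side decays like $\exp(-\alpha nt^2)$ as soon as $t$ exceeds a threshold depending only on $C,c,K$. Taking $t=C/2$ with $C$ large enough so that $C/8$ dominates $2K(1+c)$ yields $\mathbb P(\lambda_{\max}(W_{n,m})\ge C)\le e^{-\alpha Cn}$ for some $\alpha>0$ and all $n$ large; the finitely many remaining $n$ are handled by enlarging $C$. The main delicate point is the uniformity in $n$ of the log-Sobolev constant, which rests entirely on the fact that the Hessian lower bound $I_n/C$ in Hypothesis~\ref{hypo1} is dimension-free, together with the tensorization property of the LSI. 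An alternative route, matching the paper's own suggestion, would use Caffarelli's contraction theorem to write $X=T(G)$ with $G$ standard Gaussian and $T$ a $\sqrt C$-Lipschitz map, reducing to the classical Gaussian Wigner case; but since $T$ is nonlinear, controlling $\|Y_{n,m}\|_{\mathrm{op}}$ through this transport still requires essentially the same net-and-concentration argument, so it offers no real simplification over the direct Herbst bound above.
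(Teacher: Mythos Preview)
Your proof is correct, but the route differs from the paper's. The paper does not use LSI/Herbst/nets at all; instead it applies a Brascamp--Lieb type convex-comparison inequality (Harg\'e's inequality, cited as \cite[Thm~6.17]{G}): since the density of $Y_{n,m}$ with respect to the Gaussian $\gamma=\prod e^{-nY_{ij}^2/(2C)}$ is log-concave and the measure is centered, one has $\mathbb E_{Y}[g]\le \mathbb E_\gamma[g]$ for every convex $g$. Taking $g=e^{sn\lambda_{\max}(W_{n,m})}$ (convex because $\lambda_{\max}$ is convex and $x\mapsto e^{snx}$ is convex increasing) reduces the Laplace transform to the i.i.d.\ Gaussian case, where the bound $\mathbb E_\gamma[e^{sn\lambda_{\max}}]\le C^n$ is quoted from \cite[Section~2.6.2]{AGZ}; Chebyshev then finishes. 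So the paper's argument is a two-line comparison that outsources the real work to the known Gaussian estimate, whereas your argument is self-contained and reproves that estimate via nets. Your approach has the advantage of being elementary and not relying on the somewhat delicate convex-comparison inequality; the paper's approach is shorter and exploits the convexity of $\lambda_{\max}$ directly rather than discretizing it. Two minor remarks: (i) like the paper, you implicitly need $m=O(n)$, so the lemma really also uses Hypothesis~\ref{hypo3}; (ii) once you have $2e^{K(n+m)-nt^2/(2C)}$, the factor $2$ and the ``finitely many remaining $n$'' are absorbed simply by taking $t$ large, so no separate small-$n$ argument is needed.
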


\begin{proof}

Since $\: \: W_{n,m}$ is symmetric, the law of $W_{n,m}$ is the law of $Y_{n,m}$ which we can write as :
$$dY_{n,m} \: = \: \frac{1}{{Z_n}} \prod_{j=1}^{m} e^{-V(\sqrt{n}(Y_{1j}, \dotsc, Y_{nj}))} \prod_{i}^{n} \prod_{j}^{m} dY_{i,j}.$$
By \eqref{hess} this law has a strictly log-concave density and therefore, 
if we denote  by $\gamma$ the Gaussian law
 $$d\gamma = \frac{1}{Z_C} \prod_{i}^{n} \prod_{j}^{m} e^{ - \frac{n}{2C} Y_{ij}^2} dY_{i,j}\,,$$
 we can apply Brascamp-Lieb inequality (\cite[Thm 6.17]{G}) which implies 
 that for all convex function $g$, we have : $$ \int g(x)\frac{f(x)d\gamma(x)}{\int f d\gamma} \le \int g(x) d\gamma (x).$$
Applying this inequality with $g(x)=e^{ns\lambda_{\max}(W_{n,m})}$ for $s>0$
we deduce that
$$ \int e^{sn \lambda_{\max}(W_{n,m})} dY_{n,m} \le \int e^{sn \lambda_{\max}(W_{n,m})} d\gamma.$$
The right hand side is bounded by $C^n$ for some finite constant $C$, see e.g. \cite[Section 2.6.2]{AGZ}. 
Tchebychev's inequality  completes  the proof.

\end{proof}
\subsection{An example for the function V}\label{subexample}

We will show in this part that there exists $V$ such that the hypotheses made in the introduction can be verified without jeopardizing the dependance of the random variables, i.e. there exists $V$ such that $\kappa$ is different from $\mu - 1$ (which is the result expected for the case where the random variables are independent).

Let $V$ be given by 

$$ V : (X_1,\dotsc, X_n) \mapsto \frac{a}{n}(\sum_{i=1}^n (X_i^2 -m_{b,c}))^2 + b \sum_{i=1}^n X_i^2 + c\sum_{i=1}^n X_i^4.$$

Therefore, the law of the random vector $X$ can be written as $$dX = \frac{1}{Z^n_{a,b,c}} e^{\left(-\frac{a}{n}(\sum_{i=1}^n (X_i^2 -m_{b,c}))^2 - b \sum_{i=1}^n X_i^2- c\sum_{i=1}^n X_i^4\right)} \prod_{i=1}^n dX_i$$

where, if we denote by $\mathbb{E}_{a,b,c}[.]$ the expectation under this measure, $m_{b,c} =\mathbb{E}_{0,b,c} [X_i^2]$. This law has obviously a strictly log-concave density
when $a,b,c$ are positive, and it is symmetric. Hence, Hypothesis \ref{hypo1} is fulfilled.

First, let us note that for all $a \ge 0$, we have $$\frac{1}{n}\sum_{i=1}^n X_i^2 \to m_{b,c}.$$

Now, let us estimate $Z^n_{a,b,c}$. For this, we will introduce a a Gaussian distribution $\mathcal{N}(0,1)$ as it follows : 
$$Z^n_{a,b,c} = Z^n_{0,b,c} \frac{1}{\sqrt{2\pi}}\int e^{(-\frac{g^2}{2})} \left(\mathbb{E}_{0,b,c}\left[e^{\left(\frac{i\sqrt{a}g}{\sqrt{n}} (X_i^2-m_{b,c})\right)}\right]\right)^n dg.$$

Thus, we have $$ Z^n_{a,b,c}\simeq Z^n_{0,b,c} \frac{1}{\sqrt{2\pi}}\int e^{-\frac{g^2}{2}} e^{-\frac{a^2}{2} g^2 \sigma_{b,c} +O(1/\sqrt n)}  dg,$$

where $$ \sigma_{b,c}=\mathbb{E}_{0,b,c}\left[ (X_i^2-m_{b,c})^2\right]\,.$$

Therefore, $$\log (Z^n_{a,b,c}) = \log (Z^n_{0,b,c}) - \frac{1}{2} \log (1+a^2\sigma_{b,c})+ O(1).$$

On the other side, we have \begin{eqnarray*}
\mathbb E_{a,b,c}[X_iX_j] &=&1_{i=j} \mathbb E_{a,b,c}[X_i^2]\\
\mathbb E_{a,b,c}[X_i^2]&=&-\frac{1}{n}\partial_b \log (Z^n_{a,b,c})=\mathbb E_{0,b,c}[X_i^2]+O(\frac{1}{n} )\\
\mathbb E_{a,b,c}[X_i^4]&=&-\frac{1}{n}\partial_c \log (Z^n_{a,b,c})=\mathbb E_{0,b,c}[X_i^2]+O(\frac{1}{n} )\\
\frac{1}{n}\mathbb E_{a,b,c} [(\sum (X_i^2-\mathbb E_{a,b,c}[X_i^2]))^2]&=&\frac{1}{n}\mathbb E_{a,b,c} [(\sum (X_i^2-m_{b,c} ))^2]- n(m_{b,c}-\mathbb E_{a,b,c}[X_i^2])^2\\
&\sim&
-\frac{1}{n}\partial_a \log (Z^n_{a,b,c})+O(\frac{1}{n})\\
\end{eqnarray*}

Summarizing what we have done, we can set the value of $\kappa = \frac{1}{n}\mathrm{Var}(\sum X_i^2)$ with the parameter $a$, regardless of the value of $\mu = \mathbb{E}[X_i^4]$. 
It is not hard to see that the symmetry condition of  Hypothesis \ref{hyp2} is fulfilled.

\subsection{Linear Algebra}
In this section we remind a few classical linear algebra identities.
\begin{lemma}\label{linalg}
Let $A$ be a square invertible matrix, and if $A$ is a block matrix, its determinant can be computed using the following formula :

$$\det{\begin{pmatrix} A & B \\ C & D \end{pmatrix}} = \det(A) \det(D - CA^{-1}B).$$
\end{lemma}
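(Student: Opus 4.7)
The plan is to prove the identity via the block LU factorization (Schur complement decomposition), which reduces the determinant of a $2\times 2$ block matrix to the product of determinants of the $(1,1)$ block and its Schur complement. The invertibility of $A$, which is part of the hypothesis, is precisely what makes this factorization possible.

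First I would observe that, provided $A$ is invertible, one has the explicit identity
$$\begin{pmatrix} A & B \\ C & D \end{pmatrix} = \begin{pmatrix} I & 0 \\ CA^{-1} & I \end{pmatrix} \begin{pmatrix} A & B \\ 0 & D - CA^{-1}B \end{pmatrix},$$
which can be verified by multiplying out the right-hand side block by block: the $(1,1)$ block gives $A$, the $(1,2)$ block gives $B$, the $(2,1)$ block gives $CA^{-1}\cdot A = C$, and the $(2,2)$ block gives $CA^{-1}B + (D - CA^{-1}B) = D$. This is a purely formal computation and does not require any assumption beyond the existence of $A^{-1}$.

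Next I would take determinants on both sides, using the multiplicativity of the determinant. The first factor on the right is block lower triangular with identity blocks on the diagonal, so its determinant is $1$. For the second factor, which is block upper triangular, I would invoke the standard fact that the determinant of a block triangular matrix equals the product of the determinants of its diagonal blocks. This gives $\det(A)\cdot\det(D - CA^{-1}B)$ on the right, matching the claim.

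The only mildly nontrivial ingredient is the block-triangular determinant formula itself; the cleanest way to justify it is to expand by cofactors along the first $\dim(A)$ columns, where the zero block below $A$ forces any nonvanishing term in the Leibniz expansion to pair the first $\dim(A)$ row indices with the first $\dim(A)$ column indices, thereby factoring the sum as $\det(A)\cdot \det(D - CA^{-1}B)$. Since the statement is classical and each step is essentially algebraic, there is no real obstacle; the only point requiring care is tracking that $A^{-1}$ is well defined, which is built into the hypothesis of the lemma.
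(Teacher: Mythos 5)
Your proof is correct: the block LU factorization, the multiplicativity of the determinant, and the Leibniz-expansion justification of the block-triangular determinant formula together give a complete and standard argument, and the hypothesis that $A$ is invertible is used exactly where it is needed. Note that the paper itself states this lemma without proof, as a reminder of a classical identity, so there is no argument in the text to compare against; your write-up supplies precisely the standard proof one would expect.
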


\begin{lemma}\label{Schur}
For $n \times n$ Hermitian nonsingular matrix $A$, define $A_k$, which is a matrix of size $(n-1)$, to be the matrix resulting from deleting the  $k$-th row and column of $A$. If both $A$ and $A_k$ are invertible, denoting $A^{-1} = (a^{ij})$ and $\alpha_k$ the vector obtained from the $k$-th column of $A$ by deleting the $k$-th entry,  we have

$$a^{kk} = \frac{1}{a_{kk} - \alpha_k^* A_k^{-1} \alpha_k}.$$

More precisely, if for all $k$, $A_k$ is invertible, we have $$\mathrm{Tr}(A^{-1}) = \sum_{i=1}^n \frac{1}{a_{kk} - \alpha_k^* A_k^{-1} \alpha_k}.$$
\end{lemma}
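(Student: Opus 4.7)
The plan is to reduce to the $2\times 2$ block inversion identity by permuting the $k$-th row and column of $A$ into last position. Since conjugating $A$ by a permutation matrix permutes the diagonal of $A^{-1}$ in exactly the same way, the $(k,k)$ entry $a^{kk}$ of $A^{-1}$ equals the bottom-right entry of $(PAP^{*})^{-1}$, where $P$ is the permutation sending index $k$ to $n$. Write this rearranged matrix in block form as
$$PAP^{*} \;=\; \begin{pmatrix} A_k & \alpha_k \\ \alpha_k^{*} & a_{kk} \end{pmatrix},$$
where we have used Hermiticity of $A$ to identify the lower block-row with $\alpha_k^{*}$.

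Now I would apply the standard block inversion formula already recorded in the paper, namely
$$\begin{pmatrix} A & B \\ C & D \end{pmatrix}^{-1} \;=\; \begin{pmatrix} A^{-1} + A^{-1} B F^{-1} C A^{-1} & -A^{-1} B F^{-1} \\ -F^{-1} C A^{-1} & F^{-1} \end{pmatrix}, \qquad F = D - C A^{-1} B,$$
with $A \leftarrow A_k$, $B \leftarrow \alpha_k$, $C \leftarrow \alpha_k^{*}$, $D \leftarrow a_{kk}$. Then $F$ is the scalar
$$F \;=\; a_{kk} - \alpha_k^{*} A_k^{-1} \alpha_k,$$
the Schur complement of the $A_k$ block. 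Hence the $(n,n)$ entry of $(PAP^{*})^{-1}$ is $F^{-1}$, which reading back through the permutation gives exactly
$$a^{kk} \;=\; \frac{1}{\,a_{kk} - \alpha_k^{*} A_k^{-1} \alpha_k\,}.$$
The hypothesis that $A$ and $A_k$ are both invertible ensures that $F$ is nonzero (since $\det A = \det A_k \cdot F$ by Lemma \ref{linalg}), so the inversion is legitimate.

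For the trace identity, I simply sum over $k$: the assumption that every $A_k$ is invertible guarantees the displayed formula applies to each diagonal entry, so
$$\mathrm{Tr}(A^{-1}) \;=\; \sum_{k=1}^n a^{kk} \;=\; \sum_{k=1}^n \frac{1}{\,a_{kk} - \alpha_k^{*} A_k^{-1} \alpha_k\,}.$$
There is no real obstacle here; the only point requiring care is the bookkeeping of the permutation and the verification (via Lemma \ref{linalg}) that $F \neq 0$ under the stated invertibility hypotheses, so that the expressions are well defined.
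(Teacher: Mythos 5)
Your proof is correct, and it is the standard Schur-complement/block-inversion argument; the paper states this lemma without proof as a classical reminder, but the block inverse formula you invoke is exactly the one the paper itself records and uses in the proof of Theorem \ref{THMMPlaw}. Your remark that $F\neq 0$ follows from $\det A=\det A_k\cdot F$ correctly justifies the invertibility hypothesis (and note the summation index in the paper's second display should read $k$, as you have it).
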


\begin{lemma}\label{linalg2}
If the matrix $A$ and $A_k$ are both nonsingular and hermitian, we have

$$\mathrm{Tr}(A^{-1}) - \mathrm{Tr}(A_k^{-1}) = \frac{1 + \alpha_k^* A_k^{-2} \alpha_k}{a_{kk} - \alpha_k^* A_k^{-1} \alpha_k}.$$
\end{lemma}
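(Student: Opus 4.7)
The plan is to reduce to the case $k=n$ by a permutation conjugation (which preserves the trace, the invertibility of $A$ and $A_k$, and the quantities $a_{kk}$, $\alpha_k$, $A_k$), and then to read off the identity directly from the $2\times 2$ block-inverse formula for $A$ written in Schur-complement form.

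Concretely, with $k=n$, I write
\[
A = \begin{pmatrix} A_n & \alpha_n \\ \alpha_n^* & a_{nn} \end{pmatrix},
\]
and apply the block inverse formula already used in the proof of Theorem \ref{THMMPlaw} (the identity displayed just before equation \eqref{lkj2}). Setting $s := a_{nn} - \alpha_n^* A_n^{-1}\alpha_n$ (the Schur complement, which is nonzero because $A$ is invertible and the bottom-right block of $A^{-1}$ equals $s^{-1}$), this yields
\[
A^{-1} = \begin{pmatrix} A_n^{-1} + s^{-1} A_n^{-1}\alpha_n\alpha_n^* A_n^{-1} & -s^{-1} A_n^{-1}\alpha_n \\ -s^{-1}\alpha_n^* A_n^{-1} & s^{-1} \end{pmatrix}.
\]

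Taking the trace and separating the contribution of the last row/column gives
\[
\Tr(A^{-1}) = \Tr(A_n^{-1}) + s^{-1}\Tr\!\bigl(A_n^{-1}\alpha_n\alpha_n^* A_n^{-1}\bigr) + s^{-1}.
\]
Using the cyclic property of the trace together with the Hermiticity of $A_n$ (so that $(A_n^{-1})^*A_n^{-1}=A_n^{-2}$), the middle term simplifies to $s^{-1}\alpha_n^* A_n^{-2}\alpha_n$, and rearranging yields exactly
\[
\Tr(A^{-1}) - \Tr(A_n^{-1}) = \frac{1 + \alpha_n^* A_n^{-2}\alpha_n}{a_{nn} - \alpha_n^* A_n^{-1}\alpha_n}.
\]

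There is no genuine obstacle here: the result is a direct algebraic consequence of the Schur block-inverse identity (Lemma \ref{linalg}) combined with the cyclic trace identity. The only point that needs a brief justification is that $s\neq 0$, which follows from the assumed invertibility of $A$ (the determinant identity $\det A = \det(A_n)\, s$ forces $s\neq 0$ once $A$ and $A_n$ are nonsingular). Once the block decomposition of $A^{-1}$ is written down, the identity is essentially read off, so the argument is short and purely linear-algebraic.
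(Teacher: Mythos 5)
Your proof is correct: the Schur-complement block-inverse computation, the cyclicity of the trace, and the observation that $\det A=\det(A_k)\,(a_{kk}-\alpha_k^*A_k^{-1}\alpha_k)$ forces the denominator to be nonzero together give exactly the stated identity. The paper states this lemma as a classical fact without proof, and your argument is the standard derivation, using precisely the block-inverse formula the authors themselves invoke in the proof of Theorem \ref{THMMPlaw}, so there is nothing to add.
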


%\clearpage
\section*{}

\end{document}